\newenvironment{altproof}{\begin{proof}[\bfseries Alternative Proof of Proposition \ref{prop:order}.]}{\end{proof}}
\newtheorem{theorem}{Theorem}[section]
\newtheorem{fact}[theorem]{Fact}
\newtheorem{lemma}[theorem]{Lemma}
\newtheorem{corollary}[theorem]{Corollary}
\newtheorem{proposition}[theorem]{Proposition}
\theoremstyle{definition}
\newtheorem{definition}[theorem]{Definition}
\newtheorem{remark}[theorem]{Remark}
\def\Def{\operatorname{Def}}
\def\Av{\operatorname{Av}}
\def\conv{\operatorname{conv}_{\delta}}
\def\opp{\operatorname{opp}}
\def\Ult{\operatorname{Ult}}
\def\tp{\operatorname{tp}}
\def\co{\overline{\operatorname{co}}}
\title{Measures and Stability in a Model, revisited}
\keywords{Model theory, Keisler measures, stability in a model, stability, Morley product, double limit, randomization, VC theory.}
\author[C. D'Elb\'{e}e]{Christian D'Elb\'{e}e$^{*}$}
\thanks{$^{*}$ Fully supported by the Ramon y Cajal grant RYC2023-042677-I funded by MICIU/AEI/10.13039/501100011033 and by ESF+.}
\address{$^{*}$University of the Basque Country, Department of Mathematics (Leioa) / Institute for Logic, Cognition, Language and Information (Donostia-San Sebastián), Spain;\orcidlink{0000-0002-0268-1802}}
\email{christian.delbee@ehu.eus} 
\author[K. Gannon]{Kyle Gannon$^{\dagger}$}
\thanks{$^{\dagger}$ Partially supported by the Chateaubriand Fellowship of the Office for Science and
Technology of the Embassy of France in the United States; by the Fundamental Research Funds for the Central Universities, Peking University, grant no. 7100604835; by the National Natural Science Fund of China, grant no. 12501001}
\address{$^{\dagger}$Beijing International Center for Mathematical Research (BICMR) \\ 
Peking University \\ 
Beijing, China.\orcidlink{0000-0001-5951-5269}}
\email{kgannon@bicmr.pku.edu.cn}
\date{\today}
\begin{document}

\begin{abstract} This article is written in celebration of the 8th Kazakh-French Logical Colloquium. We expand on an unpublished research note of the second author. We record some results concerning local Keisler measures with respect to a formula which is \emph{stable in a model}. We prove that in this context, every local Keisler measure on the associated local type space is a weighted sum of (at most countably many) types. Using this observation, we give an elementary proof of the commutativity of the Morley product in this context.  We then give a functional analytic proof that the double limit property lifts to the appropriate evaluation map on pairs of local measures. We end with some comments on the NOP and local measures in the (properly) stable context. 
\end{abstract}

\maketitle

\section{Introduction}

The Franco--Kazakh connections in mathematical logic date back to the late 1980s with the collaboration between Tolendi Mustafin and Bruno Poizat -- leading to the first Soviet--French colloquium in model theory at Karaganda State University in Kazakhstan in 1990. Since then, the will to maintain and improve those mathematical links has persisted and were rekindled in recent times in Lyon in 2022, and then in Astanna in 2025. The reach of those connections naturally go far beyond France and Kazakhstan, as they witness fruitful transfer of knowledge and cross-fertilization of ideas among researchers from Europe, Russia, the United States, and China. The present paper embodies this transversality, and present work stemming (in part) from the participation of the authors to the \textit{8th Kazakh-French Logical Colloquium}. The authors are very grateful to the organizers of that meeting. They hope that their contribution to the Kazakh mathematical library will help strengthen the preexisting intellectual and social relationships between our research groups.

After Ben Yaacov's original article connecting stability theory with some of Grothendieck's functional analytic work \cite{BEN}, the concepts of \emph{stability in a model} and \emph{NIP in a model} were studied by a myriad of researchers in the field \cite{conant2019note,Conant,khanaki2020stability,khanaki2018remarks,khanaki2024grothendieck,Pillay,STAR}. In particular, it was the subject of several intense discussions at the Notre Dame model theory seminar in the Spring of 2017. This research arose from that localized frenzy of activity. 


It turns out that fundamental results in local stability theory can be generalized to the context of \emph{stable in a model} -- and in particular, via reinterpreting some of Grothendieck's work on functional analysis. From our perspective, it is natural to ask, \emph{What does the theory of Keisler measures look like in this setting?} We show that it also closely resembles the picture in the stable context. First, we show that if $\varphi(x;y)$ is stable in $M$, then every $\varphi-$measure on $M$ (finitely additive probability measure on $\varphi-$definable subsets of $M^{x}$) is the sum of (at most countably many) weighted $\varphi-$types. As consequence, all $\varphi-$measures on $M$ are $\varphi^{\opp}-$definable. Thus evaluating the Morley product between arbitrary pairs of $\varphi$ and $\varphi^{\opp}-$measures on the formula $\varphi(x,y)$ is well-defined. In \cite{BEN}, Ben Yaacov demonstrates that the \emph{fundamental theorem of stability theory} extends to the \emph{stable in a model} context; it then follows from the observations above that the Morley product evaluated at $\varphi(x,y)$ commutes on appropriate pairs of measures. In other words, if $\varphi(x;y)$ is stable in $M$, $\mu \in \mathfrak{M}_{\varphi}(M)$, and $\nu \in \mathfrak{M}_{\varphi^{\opp}}(M)$, then
\begin{equation*} 
(\mu_{x} \otimes \nu_{y})(\varphi(x,y)) = (\nu_{y} \otimes \mu_{x})(\varphi(x,y)). 
\end{equation*} 

The final object of interest is the evaluation map itself. We consider the function $E_{\varphi}:\mathfrak{M}_{\varphi}(M) \times \mathfrak{M}_{\varphi^{\opp}}(M) \to [0,1]$ via, 
\begin{equation*} 
E_{\varphi}(\mu,\nu) = (\mu_{x} \otimes \nu_{y})(\varphi(x,y)). 
\end{equation*} 
We show that if $\varphi(x;y)$ is stable in $M$, then $E_{\varphi}$ also witnesses the appropriate variant of the \emph{non-order property}. This follows more or less directly from results in functional analysis, i.e., Grothendeick's double limit theorem and the Krien-Smulian theorem. Finally, we consider the context where $\varphi(x;y)$ is a stable formula (i.e., $\varphi(x;y)$ does not have the $k-$order property for some fixed $k$). We prove that the map $E_{\varphi}$ is $(r,\epsilon)-$stable for any choice of $r \in (0,1)$ and $\epsilon > 0$. We remark that this follows implicitly by a result of Ben Yaacov and Keisler, namely the fact that the randomization of a stable formula remains stable (\cite[Theorem 5.14]{ben2009randomizations}). We exposit why this is true and then provide a different proof which follows from the VC-theorem, mimicking techniques involving measures in NIP theories (i.e., See the proof of \cite[Theorem 3.12]{gannon2019local}). This latter proof implies the existence of bounds, yet we leave analysis along these lines open. 

\subsection{Preliminaries} Throughout the article, fix a language $\mathcal{L}$ and an $\mathcal{L}-$structure, $M$. We use the letters $x,y,z,\dots$ to denote finite tuples of variables. The formula $\varphi(x;y)$ is a partitioned $\mathcal{L}-$formula with \emph{variables} variables $x$ and \emph{parameter} variables $y$. We let $\varphi^{\opp}(y;x)$ be the same formula as $\varphi(x;y)$, but with exchanged roles for the variables and parameters. We let $S_\varphi(M)$ be the space of $\varphi-$types with parameters from $M$. We let $\Def_{\varphi}(M)$ be the Boolean algebra of definable subsets of $M$ generated by $\{\varphi(x,b): b\in M\}$. We will routinely identify definable sets with the formulas which define them. A $\varphi-$formula is an element of $\Def_{\varphi}(M)$. Likewise, we have analogous definitions for $S_{\varphi^{\opp}}(M)$ and $\Def_{\varphi^{\opp}}(M)$. A $\varphi^{\opp}-$definition for a type $p$ in $S_{\varphi}(M)$ is a $\varphi^{\opp}-$formula, $d^{\varphi^{\opp}}_p(y)$, such that for each $b \in M^{y}$, $\varphi(x,b) \in p$ if and only if $M \models d^{\varphi^{\opp}}_p(b)$. Finally, we let $\mathfrak{M}_{\varphi}(M)$ and $\mathfrak{M}_{\varphi^{\opp}}(M)$ denote the spaces of finitely additive probability measures on $\Def_{\varphi}(M)$ and $\Def_{\varphi^{\opp}}(M)$ respectively. We recall that we can identify a measure in each of these spaces canonically with a regular Borel probability measure on their corresponding type space, e.g. $\mathfrak{M}_{\varphi}(M)$ is in canonical correspondence with regular Borel probability measures on $S_{\varphi}(M)$. 

\begin{definition}[Double Limit Property] Let $X$ and $Y$ be sets and let $f:X \times Y \to [0,1]$. We say that $f$ has the \textit{double limit property} if for any two sequence $(a_i)_{i \in \mathbb{N}}$, $(b_j)_{j \in \mathbb{N}}$ with $a_i \in X$ and $b_j \in Y$,
\begin{equation*}
    \lim_i \lim_j f(a_i,b_j) = \lim_j \lim_i f(a_i,b_j)
\end{equation*}
provided limits on both sides exist. 
\end{definition} 

The definition of \emph{stable in a model} given in \cite{BEN} restricted to discrete structures is as follows:

\begin{definition} A formula $\varphi(x;y)$ is \textit{stable in $M$} if $\varphi:M^{x} \times M^{y} \to \{0,1\}$ has the double limit property, where $\varphi(a,b) = 1$ if $M \models \varphi(a,b)$ and $\varphi(a,b) = 0$ otherwise. 
\end{definition} 

We first remark that clearly $\varphi(x;y)$ is stable in $M$ if and only if $\varphi^{\opp}(x;y)$ is stable in $M$. We also remark that if $\varphi(x;y)$ is stable, then $\varphi(x;y)$ is stable in any model of the underlying theory. An example of a formula which is stable in a model but not stable is the edge relation in the graph constructed by taking the disjoint union of all finite graphs on subsets of $\mathbb{N}$.  

In \cite{BEN}, Ben Yaacov established a surprising connection between functional analysis and local stability. In particular, he gave a proof of the \textit{fundamental theorem of stability} using Grothendieck's \emph{double limit theorem} \cite{GR}. Before stating the theorem, let us briefly recall some basic functional analysis. \begin{definition}[Weak Topology] Let $Y$ be a Banach space over a field $F$ $(F = \mathbb{R}$ or $\mathbb{C}$). Let $Y^{*}$ be the space of continuous linear functionals from $Y$ to $F$. Then, the \textit{weak topology on} $Y$ is the coarsest topology such that for each element of $Y^{*}$ remains a continuous function from $Y$ to $F$. 
\end{definition} 

\begin{definition}[Relatively Weakly Compact] Let $Y$ be a Banach space and let $A \subset Y$. We say that $A$ is \textit{weakly compact} if $A$ is a compact subset of $Y$ under the weak topology. Furthermore, we say that $A$ is \textit{relatively weakly compact} if the closure of $A$ under the weak topology is weakly compact. 
\end{definition}

Let $X$ be a topological space. Then $C_b(X)$ denotes the Banach space of bounded, continuous, complex-valued functions on $X$, equipped with the uniform norm, $||\cdot||_{\infty}$. We say that a set $A$ is $||\cdot||_{\infty}-$bounded if there exists $c$ in $\mathbb{R}$ such that for all $f$  in $A$, $||f||_{\infty} < c$. Grothendieck's theorem is as follows: 
\begin{theorem}[Grothendieck \cite{GR}]\label{GR}
Let $X$ be an arbitrary topological space, $X_0 \subseteq X$ a dense susbset. Then the following are equivalent for $A \subset C_b(X)$: 
\begin{enumerate}[$(i)$] 
\item The set $A$ is relatively weakly compact in $C_b(X)$. 
\item The set $A$ is $||\cdot||_\infty -$bounded, and whenever $f_n \in A$ and $x_n \in X_0$ form two sequences we have that 
$$ \lim_n \lim_m f_n(x_m) = \lim_m \lim_n f_n(x_m),$$
provided both limits exist. 
\end{enumerate} 
\end{theorem}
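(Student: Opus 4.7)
The plan is to prove each direction of the equivalence separately; the forward direction is a short application of Eberlein--\v{S}mulian together with an ultrafilter trick, while the reverse direction requires a more delicate diagonal construction.

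For $(i) \Rightarrow (ii)$, boundedness of $A$ is automatic, since weakly compact subsets of a Banach space are norm-bounded. To verify the double-limit property, fix sequences $(f_n) \subset A$ and $(x_m) \subset X_0$ for which $L_1 = \lim_n \lim_m f_n(x_m)$ and $L_2 = \lim_m \lim_n f_n(x_m)$ both exist. Since $A$ is relatively weakly compact, the Eberlein--\v{S}mulian theorem produces a subsequence $f_{n_k}$ converging weakly to some $f$ lying in $C_b(X)$. Each point-evaluation $\delta_x$ is a bounded linear functional on $C_b(X)$ of norm at most $1$, so $f_{n_k}(x) \to f(x)$ for every $x \in X$; in particular $f(x_m) = \lim_n f_n(x_m)$ and hence $\lim_m f(x_m) = L_2$. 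Now fix a non-principal ultrafilter $\mathcal{U}$ on $\mathbb{N}$ and set $\phi(g) = \lim_{m \to \mathcal{U}} g(x_m)$, which is another bounded linear functional on $C_b(X)$. Weak convergence yields $\phi(f_{n_k}) \to \phi(f)$; since ordinary limits agree with ultrafilter limits when the former exist, $\phi(f_{n_k}) = \lim_m f_{n_k}(x_m)$ is a subsequence of $(\lim_m f_n(x_m))_n$, which converges to $L_1$, while $\phi(f) = \lim_m f(x_m) = L_2$. Therefore $L_1 = L_2$.

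For $(ii) \Rightarrow (i)$, consider the pointwise closure $\overline{A}^{\mathrm{pw}}$ of $A$ inside $\mathbb{C}^X$. Boundedness of $A$ places $\overline{A}^{\mathrm{pw}}$ inside a product of closed disks of fixed radius, which is compact by Tychonoff, so $\overline{A}^{\mathrm{pw}}$ is pointwise compact. The key claim is $\overline{A}^{\mathrm{pw}} \subseteq C_b(X)$, which I would establish by contradiction via a diagonal construction: supposing some $g \in \overline{A}^{\mathrm{pw}}$ is discontinuous at a point $x_0 \in X$, density of $X_0$ produces points of $X_0$ arbitrarily close to $x_0$ on which $g$ differs from $g(x_0)$ by at least some fixed $\epsilon > 0$. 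Recursively construct $x_m \in X_0$ and $f_n \in A$ so that each $x_m$ satisfies $|g(x_m) - g(x_0)| \geq \epsilon$ and also $|f_k(x_m) - f_k(x_0)| < 1/m$ for each previously-chosen $f_k$ (using continuity of $f_k$ at $x_0$ together with density of $X_0$); and each $f_n$ satisfies $|f_n(x_k) - g(x_k)| < 1/n$ for $k \leq n$ and for $x_0$ (using $g \in \overline{A}^{\mathrm{pw}}$). Passing if necessary to a subsequence so that $g(x_m)$ converges, one computes $\lim_n \lim_m f_n(x_m) = \lim_n f_n(x_0) = g(x_0)$ but $\lim_m \lim_n f_n(x_m) = \lim_m g(x_m)$, which differs from $g(x_0)$, contradicting $(ii)$. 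Having obtained $\overline{A}^{\mathrm{pw}} \subseteq C_b(X)$, one invokes the standard Eberlein--Grothendieck principle: on bounded subsets of $C_b(X)$ whose pointwise closure remains in $C_b(X)$, pointwise compactness and weak compactness coincide; thus $\overline{A}^{\mathrm{pw}}$ is weakly compact, and in particular so is the weak closure of $A$.

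The primary obstacle is the diagonal construction in the reverse direction: the sequences $(f_n)$ and $(x_m)$ must be interleaved so carefully that both iterated limits genuinely exist and take distinct values, while simultaneously keeping $x_m \in X_0$ so that hypothesis $(ii)$ applies to the resulting double sequence. Density of $X_0$ is indispensable both for choosing the witnessing $x_m$ in $X_0$ and for arranging the continuity-based bounds $|f_k(x_m) - f_k(x_0)| < 1/m$. Upgrading pointwise to weak compactness at the end, while technically non-trivial, is by now a standard component of the functional-analytic toolkit surrounding Grothendieck's theorem.
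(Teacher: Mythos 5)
The paper does not prove this statement -- it is quoted from Grothendieck's 1952 paper \cite{GR} as a black box -- so your attempt can only be judged on its own terms. Your direction $(i)\Rightarrow(ii)$ is correct and nicely economical: Eberlein--\v{S}mulian to extract a weak limit $f$, point evaluations to identify $f(x_m)=\lim_n f_n(x_m)$, and the ultrafilter functional $\phi(g)=\lim_{m\to\mathcal U}g(x_m)$ to capture the other iterated limit. No complaints there.

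The direction $(ii)\Rightarrow(i)$ has a genuine gap, and it sits exactly at the one place where the dense subset $X_0$ has to be handled. You assert that if $g\in\overline{A}^{\mathrm{pw}}$ is discontinuous at $x_0$, then ``density of $X_0$ produces points of $X_0$ arbitrarily close to $x_0$ on which $g$ differs from $g(x_0)$ by at least some fixed $\epsilon>0$.'' This does not follow: discontinuity of $g$ at $x_0$ is witnessed by points of $X$, and since $g$ is precisely \emph{not} continuous, there is no way to slide those witnesses into $X_0$ by density alone. Concretely, take $X=[0,1]$, $X_0=\mathbb{Q}\cap[0,1]$, and $g=\mathbf{1}_{[0,1]\setminus\mathbb{Q}}$ (which lies in the pointwise closure of the unit ball of $C[0,1]$): at any rational $x_0$ the function $g$ is discontinuous, yet $g\equiv g(x_0)=0$ on all of $X_0$, so the required $x_m$ simply do not exist. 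Your diagonal construction is internally coherent \emph{granted} such $x_m$, and it does prove the theorem in the special case $X_0=X$; but the passage from $X$ to a proper dense $X_0$ is one of the two genuinely nontrivial points of Grothendieck's theorem (the other being the pointwise-to-weak compactness upgrade, which you reasonably cite). The standard repair is not a density argument at all: one first shows that the double-limit condition over $X_0$ implies the double-limit condition over all of $X$ (equivalently, over the relevant compactification), and this step requires \emph{re-applying hypothesis $(ii)$ itself} to auxiliary sequences $x_m^j\in X_0$ approximating each bad point $y_m\in X$ with respect to finitely many of the $f_n$ at a time, then diagonalizing. As written, your argument would ``prove'' too much -- it never uses the double-limit hypothesis to control $g$ off of the finitely many $f_k$ chosen so far -- so the gap is structural rather than cosmetic.
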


Via the double limit theorem above, Ben Yaacov derived the following (among other results):

\begin{theorem}\label{B} Assume that $\varphi(x;y)$ is stable in $M$, $p \in S_{\varphi}(M)$, and $q \in S_{\varphi^{\opp}}(M)$. Then $p$ has a $\varphi^{\opp}-$definition $d^{\varphi}_{p}(y)$, $q$ has a $\varphi-$definition $d^{\varphi^{\opp}}_{q}(x)$, and $d^{\varphi^{\opp}}_p(y) \in q$ if and only if $d^{\varphi}_{q}(x) \in p$. 
\end{theorem}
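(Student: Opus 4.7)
The plan is to follow Ben Yaacov's functional-analytic strategy, applying Theorem \ref{GR} to the two families
\[A_\varphi = \{\varphi(a,\cdot) : a \in M^x\} \subset C_b(S_{\varphi^{\opp}}(M)), \qquad A_{\varphi^{\opp}} = \{\varphi(\cdot,b) : b \in M^y\} \subset C_b(S_\varphi(M)),\]
where each element is identified with the continuous indicator of the clopen set it cuts out on the corresponding Stone space. Realized types form a dense subset of each Stone space, and the double-limit condition of Theorem \ref{GR}(ii) tested on this dense set is exactly the hypothesis that $\varphi$ is stable in $M$. So both $A_\varphi$ and $A_{\varphi^{\opp}}$ are $\|\cdot\|_\infty$-bounded (by $1$) and relatively weakly compact in their ambient Banach spaces.

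To construct the $\varphi^{\opp}$-definition of $p \in S_\varphi(M)$, I would fix a net $(a_i) \subset M^x$ with $\tp_\varphi(a_i/M) \to p$ (available by density of realized types), pass to a subnet along which $\varphi(a_i,\cdot)$ converges weakly in $C_b(S_{\varphi^{\opp}}(M))$, and call its limit $\tilde h_p$. Weak convergence in $C_b$ forces pointwise convergence through point-evaluation functionals, so $\tilde h_p$ agrees on the dense set of realized $\varphi^{\opp}$-types with the $\{0,1\}$-valued function $b \mapsto \chi_{\varphi(x,b)\in p}$; since $\{0,1\}$ is closed in $[0,1]$ and $\tilde h_p$ is continuous, $\tilde h_p$ is in fact $\{0,1\}$-valued on all of $S_{\varphi^{\opp}}(M)$. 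It is therefore the characteristic function of some clopen set, hence corresponds to a formula $d^{\varphi^{\opp}}_p(y) \in \Def_{\varphi^{\opp}}(M)$ satisfying $M \models d^{\varphi^{\opp}}_p(b) \Leftrightarrow \varphi(x,b)\in p$. Exchanging the roles of $\varphi$ and $\varphi^{\opp}$ yields $d^\varphi_q(x)$ for each $q \in S_{\varphi^{\opp}}(M)$.

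For the symmetry of the pairing, I would invoke the Eberlein--\v{S}mulian theorem to upgrade the relative weak compactness of $A_\varphi$ and $A_{\varphi^{\opp}}$ to relative weak sequential compactness, obtaining sequences $(a_n) \subset M^x$ and $(b_m) \subset M^y$ with $\varphi(a_n,\cdot) \to \tilde h_p$ and $\varphi(\cdot,b_m) \to \tilde h_q$ weakly. Pointwise evaluation at realized types forces $\tp_\varphi(a_n/M) \to p$ and $\tp_{\varphi^{\opp}}(b_m/M) \to q$ in the respective Stone topologies, so continuity of the extensions gives
\begin{align*}
\tilde h_p(q) &= \lim_m \chi_{\varphi(x,b_m)\in p} = \lim_m \lim_n \varphi(a_n,b_m),\\
\tilde h_q(p) &= \lim_n \chi_{\varphi(a_n,y)\in q} = \lim_n \lim_m \varphi(a_n,b_m).
\end{align*}
The inner limits exist because the converging types are eventually constant on each basic clopen; the outer limits exist by continuity of $\tilde h_p, \tilde h_q$. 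Stability of $\varphi$ in $M$ equates the two iterated limits, so $\tilde h_p(q) = \tilde h_q(p)$, i.e. $d^{\varphi^{\opp}}_p \in q$ if and only if $d^\varphi_q \in p$.

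The chief technical points to check are (i) that $\tilde h_p$ is globally $\{0,1\}$-valued, so it defines a genuine element of $\Def_{\varphi^{\opp}}(M)$ rather than merely a continuous real-valued function, and (ii) that honest sequences (not just nets) can be extracted to feed into the stability hypothesis, which is stated in terms of sequences. Both are handled above: (i) by closedness of $\{0,1\}$ together with density of realized types, and (ii) by Eberlein--\v{S}mulian applied to the relatively weakly compact families $A_\varphi$ and $A_{\varphi^{\opp}}$.
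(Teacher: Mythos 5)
The paper states Theorem \ref{B} as an imported result of Ben Yaacov (\cite{BEN}) and gives no proof of its own, but your argument is a correct reconstruction of exactly the derivation the paper credits: Grothendieck's Theorem \ref{GR} applied to $\{\varphi(a,\cdot):a\in M^{x}\}\subset C_b(S_{\varphi^{\opp}}(M))$ with the realized types as the dense subset $X_0$, together with Eberlein--\v{S}mulian to extract honest sequences for the symmetry clause. The two technical points you flag --- global $\{0,1\}$-valuedness of the weak limit (via $\tilde h_p(\overline{X_0})\subseteq\overline{\tilde h_p(X_0)}\subseteq\{0,1\}$) and sequences versus nets --- are precisely the right ones, and both are handled correctly.
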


Finally, we recall the following equivalence from Starchenko's research note on the topic.

\begin{fact}\label{Equiv} The following are equivalent: 
\begin{enumerate}[($i$)]
\item $\varphi(x;y)$ is stable in $M$. 
\item There does not exist $(a_i,b_j)_{(i,j) \in \omega \times \omega}$ from $M^{x} \times M^{y}$such that 
\begin{enumerate} 
\item either for every $i \neq j$, $M \models \varphi(a_i,b_j)$ if and only if $i < j$. 
\item or for all $i \neq j$, $M \models \varphi(a_i,b_)$ if and only if $i > j$. 
\end{enumerate} 

\item The map $\chi_{\varphi}:S_{\varphi}(M) \times S_{\varphi^{\opp}}(M) \to \{0,1\}$ has the double limit property where 
\begin{equation*} 
\chi_{\varphi}(p,q) = 1 \iff d_{p}^{\varphi}(y) \in q \iff d_{q}^{\varphi^{\opp}}(x) \in p. 
\end{equation*} 
\end{enumerate} 
\end{fact}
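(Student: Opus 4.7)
The plan is to handle the three equivalences separately: I will prove $(i) \Leftrightarrow (ii)$ by a standard sequence-extraction argument, $(iii) \Rightarrow (i)$ by restriction to realized types, and the substantive direction $(i) \Rightarrow (iii)$ by exploiting Theorem \ref{B} to make $\chi_\varphi$ separately continuous and then running a subsequence-plus-continuity argument.

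For $(i) \Leftrightarrow (ii)$, I would argue by contraposition. If $(i)$ fails, there exist sequences $(a_i) \subseteq M^x$ and $(b_j) \subseteq M^y$ with both iterated limits of $\varphi(a_i, b_j)$ existing but unequal (one is $0$, the other $1$, since values are in $\{0,1\}$). A standard diagonal extraction then produces subsequences realising one of the two order patterns of $(ii)$. Conversely, an array as in $(ii)$ immediately forces $\lim_i \lim_j \varphi(a_i, b_j) \neq \lim_j \lim_i \varphi(a_i, b_j)$. For $(iii) \Rightarrow (i)$, realized types are dense in both $S_\varphi(M)$ and $S_{\varphi^{\opp}}(M)$, and a quick unwinding of the definitions gives $\chi_\varphi(\tp_\varphi(a/M), \tp_{\varphi^{\opp}}(b/M)) = 1 \iff \varphi(a,b)$ for $a \in M^x$, $b \in M^y$; so the double limit property for $\chi_\varphi$ specialises to that for $\varphi$.

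For the key direction $(i) \Rightarrow (iii)$: by Theorem \ref{B}, each $p \in S_\varphi(M)$ has a $\varphi^{\opp}$-definition $d_p^\varphi(y)$ (a $\varphi^{\opp}$-formula) and each $q \in S_{\varphi^{\opp}}(M)$ has a $\varphi$-definition $d_q^{\varphi^{\opp}}(x)$ (a $\varphi$-formula). Hence $[d_p^\varphi] \subseteq S_{\varphi^{\opp}}(M)$ and $[d_q^{\varphi^{\opp}}] \subseteq S_\varphi(M)$ are clopen, so $\chi_\varphi(\cdot, q)$ and $\chi_\varphi(p, \cdot)$ are each continuous, making $\chi_\varphi$ separately continuous on the compact product $S_\varphi(M) \times S_{\varphi^{\opp}}(M)$. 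Now given sequences $(p_i), (q_j)$ with both iterated limits $L_1 = \lim_i \lim_j \chi_\varphi(p_i, q_j)$ and $L_2 = \lim_j \lim_i \chi_\varphi(p_i, q_j)$ existing, I would pass to further subsequences with $p_{i_k} \to p^* \in S_\varphi(M)$ and $q_{j_l} \to q^* \in S_{\varphi^{\opp}}(M)$, possible by compactness; subsequences preserve both iterated limits. Separate continuity then gives, for each fixed $k$, $\lim_l \chi_\varphi(p_{i_k}, q_{j_l}) = \chi_\varphi(p_{i_k}, q^*)$, and in turn $\lim_k \chi_\varphi(p_{i_k}, q^*) = \chi_\varphi(p^*, q^*)$, so $L_1 = \chi_\varphi(p^*, q^*)$. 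Symmetrically $L_2 = \chi_\varphi(p^*, q^*)$, and hence $L_1 = L_2$.

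The main obstacle is establishing separate continuity of $\chi_\varphi$ in the first place: all the analytic heavy lifting (Grothendieck's double limit theorem) is packaged inside Theorem \ref{B}. Once the definability statements of Theorem \ref{B} are quoted and converted to clopen subsets of the type spaces, the double limit property on $S_\varphi(M) \times S_{\varphi^{\opp}}(M)$ reduces to a clean compactness-plus-separate-continuity argument, with both iterated limits forced to agree with the value $\chi_\varphi(p^*, q^*)$ at any pair of subsequential cluster points.
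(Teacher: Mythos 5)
Your overall architecture is sound, and in fact more self-contained than the paper's treatment (the paper does not prove this Fact at all: it cites Starchenko's note for $(i)\Leftrightarrow(ii)$ and for $(i)\Rightarrow(iii)$ via Theorem \ref{B}, and dismisses $(iii)\Rightarrow(i)$ as clear). Your arguments for $(i)\Leftrightarrow(ii)$ and $(iii)\Rightarrow(i)$ are correct: the diagonal extraction for $(i)\Leftrightarrow(ii)$ is purely combinatorial on indices, and for $(iii)\Rightarrow(i)$ you correctly use that a realized type $\tp_{\varphi}(a/M)$ always has the $\varphi^{\opp}$-definition $\varphi(a,y)$, so $\chi_{\varphi}$ restricted to realized types is literally $\varphi$.

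There is, however, one step in your $(i)\Rightarrow(iii)$ argument that fails as written: you ``pass to further subsequences with $p_{i_k}\to p^{*}$ \dots possible by compactness.'' Compactness does not give sequential compactness. The space $S_{\varphi}(M)$ is the Stone space of the Boolean algebra $\Def_{\varphi}(M)$, which is metrizable only when that algebra is countable; for uncountable $M$ this typically fails, and in a general compact Hausdorff space (e.g.\ $\beta\mathbb{N}$) a sequence need not admit any convergent subsequence. The repair is short and keeps your separate-continuity idea intact: take $p^{*}$ to be a \emph{cluster point} of $(p_i)$ and $q^{*}$ a cluster point of $(q_j)$, which compactness does provide. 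For fixed $i$, the sequence $(\chi_{\varphi}(p_i,q_j))_j$ converges to $c_i:=\lim_j\chi_{\varphi}(p_i,q_j)$ by hypothesis; since $\chi_{\varphi}(p_i,\cdot)=\mathbf{1}_{[d_{p_i}^{\varphi}]}$ is continuous, $\chi_{\varphi}(p_i,q^{*})$ is a cluster point of that convergent sequence, hence equals $c_i$. Then $\chi_{\varphi}(\cdot,q^{*})=\mathbf{1}_{[d_{q^{*}}^{\varphi^{\opp}}]}$ is continuous (this is where you need the symmetry clause of Theorem \ref{B}, not just existence of definitions), so $\chi_{\varphi}(p^{*},q^{*})$ is a cluster point of the convergent sequence $(c_i)_i$ and therefore equals $L_1$; the symmetric computation gives $L_2=\chi_{\varphi}(p^{*},q^{*})$. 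With that substitution your proof is complete.
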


In the above fact, the equivalence of $(i)$ and $(ii)$ \cite[Lemma 1.3]{STAR}. Clearly, $(iii)$ implies $(i)$ and $(i)$ implies $(iii)$ follows from \cite[Theorem 1.5]{STAR} together with Theorem \ref{B} (Also see \cite{Pillay}).

\section{Local Keisler measures}

In this section, we prove that if $\varphi(x;y)$ is stable in $M$, then all $\varphi-$measures are (at most) countable sums of weighted $\varphi-$types. The proof of this theorem uses the Sobczyk--Hammer decomposition theorem for positive, bounded charges. We recall this theorem in the case of finitely additive probability measures. But first, we need to recall two different kinds of finitely additive measures. 

\begin{definition} Let $\mathbb{B}$ be a Boolean algebra of subsets of $X$ (containing both $X$ and $\emptyset$) and $\mu$ be a finitely additive probability measure on $\mathbb{B}$.
\begin{enumerate}
    \item We say that $\mu$ is \textit{strongly continuous} on $\mathbb{B}$ if for all $\epsilon > 0$ there exist $F_1,...,F_n \in \mathbb{B}$ such that $\{F_i\}_{i=1}^n$ forms a partition of $X$ and for each $i \leq n$, $\mu(F_i) < \epsilon$.
    \item We say that $\mu$ is \textit{$\{0,1\}-$valued} on $\mathbb{B}$ if for every $F$ in $\mathbb{B}$, $\mu(F) \in \{0,1\}$.
\end{enumerate}
\end{definition}

We refer the reader to \cite[Theorem 5.2.7]{Charge} for a proof of the following theorem.

\begin{theorem}[Sobczyk--Hammer Decomposition Theorem \cite{SH}] Let $\mathbb{B}$ be a Boolean algebra on $X$ (containing $\emptyset$ and $X$) and $\mu$ be a finitely additive probability measure on $\mathbb{B}$. Then, there exists an initial segment $I$ of $\mathbb{N}$, a sequence of distinct finitely additive probability measures $(\mu_{i})_{i \in I}$, and a sequence of non-negative real numbers $(r_i)_{i \in I}$, with the following properties,
\begin{enumerate}[($i$)]
    \item $\mu_0$ is strongly continuous on $\mathbb{B}$,
    \item $\mu_i$ is $\{0,1\}-$valued on $\mathbb{B}$ for every $i\geq 1$,
    \item $\sum_{i \in I} r_i = 1$, and 
    \item $\mu = \sum_{i \in I} r_i \mu_i$. 
\end{enumerate}

Further, the decomposition in $(iv)$ is unique (obviously, up to permutation of the sequence and non-trivially weighted measures (i.e., $r_i > 0$)). 
\end{theorem}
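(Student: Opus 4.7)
The plan is to reduce the theorem to the classical decomposition of a regular Borel probability measure on a compact Hausdorff space into its atomless and purely atomic parts, via Stone duality. Let $S = S(\mathbb{B})$ denote the Stone space of $\mathbb{B}$, whose points are the ultrafilters on $\mathbb{B}$ and whose topology has a clopen basis $\{\hat{A} : A \in \mathbb{B}\}$ with $\hat{A} := \{\mathcal{U} \in S : A \in \mathcal{U}\}$. Since $S$ is compact, any finitely additive probability measure on the Boolean algebra of clopens of $S$ is automatically countably additive (a countable disjoint clopen cover of a clopen is in fact finite), so $\mu$ extends uniquely by Carath\'eodory to a regular Borel probability measure $\tilde\mu$ on $S$ via $\tilde\mu(\hat{A}) = \mu(A)$.

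Under this correspondence, two translations are crucial. First, a $\{0,1\}$-valued finitely additive probability measure on $\mathbb{B}$ is exactly an ultrafilter on $\mathbb{B}$, so corresponds to a Dirac mass $\delta_{\mathcal{U}}$ on $S$. Second, $\mu$ is strongly continuous on $\mathbb{B}$ if and only if $\tilde\mu$ is atomless on $S$. The forward direction is immediate: if $\tilde\mu(\{x\}) \geq \epsilon$, then in any $\mathbb{B}$-partition of $X$ the part containing (the Stone dual of) $x$ has $\mu$-measure at least $\epsilon$. For the reverse, assume $\tilde\mu$ is atomless; given $\epsilon > 0$, outer regularity and atomlessness yield for each $x \in S$ an open neighborhood $U_x$ with $\tilde\mu(U_x) < \epsilon$, refinable to a clopen $V_x \subseteq U_x$ since the clopens form a basis. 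Compactness of $S$ extracts a finite subcover $V_1,\dots,V_n$; disjointifying $W_i := V_i \setminus (V_1 \cup \cdots \cup V_{i-1})$ stays in the clopen algebra and produces a partition of $S$ with all parts of measure less than $\epsilon$, which transfers via Stone duality to the required $\mathbb{B}$-partition of $X$.

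Once both translations are secured, the theorem reduces to the standard atomic decomposition of $\tilde\mu$. The atom set $A_{\tilde\mu} := \{x \in S : \tilde\mu(\{x\}) > 0\}$ is at most countable since pairwise disjoint singletons and countable additivity force $\sum_{x \in A_{\tilde\mu}} \tilde\mu(\{x\}) \leq 1$. Enumerate the atoms as $x_1, x_2, \dots$ with weights $r_i := \tilde\mu(\{x_i\})$; the residual $\tilde\mu_c := \tilde\mu - \sum_{i \geq 1} r_i \delta_{x_i}$ is atomless and nonnegative, and setting $r_0 := \tilde\mu_c(S)$ and $\mu_0$ equal to $\tilde\mu_c/r_0$ pulled back to $\mathbb{B}$ (taking any strongly continuous measure if $r_0 = 0$) gives the strongly continuous component, while each $\mu_i$ ($i\geq 1$) is the $\{0,1\}$-valued measure dual to $\delta_{x_i}$. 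Uniqueness of the weighted sequence (up to permutation) propagates directly from the uniqueness of the atoms of $\tilde\mu$ and their masses.

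The main obstacle is the second translation, specifically the reverse direction: strong continuity is a combinatorial condition about partitions inside the (possibly impoverished) Boolean algebra $\mathbb{B}$, whereas atomlessness is a measure-theoretic condition on the Stone space. The bridge is precisely the compactness of $S$ together with the outer regularity of $\tilde\mu$, which converts the pointwise local smallness supplied by atomlessness into a genuinely finite clopen partition realizable inside $\mathbb{B}$; all other steps are essentially formal consequences of Stone duality and the countability-of-atoms argument.
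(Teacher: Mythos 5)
The paper offers no proof of this theorem: it states it and refers the reader to \cite[Theorem 5.2.7]{Charge}, so there is no in-paper argument to compare against line by line. Your proof is correct and takes a genuinely different, self-contained route: rather than the direct charge-theoretic argument on $\mathbb{B}$ carried out in the cited reference, you transport $\mu$ to a regular Borel measure $\tilde\mu$ on the Stone space $S(\mathbb{B})$ (using that finite additivity on a clopen algebra of a compact space is automatically countable additivity) and then invoke the classical atomic/continuous decomposition of a countably additive measure. The two dictionary entries you isolate --- $\{0,1\}$-valued charges correspond to ultrafilters, hence to Dirac masses, and strong continuity corresponds to the absence of point masses --- are exactly right, and your compactness-plus-outer-regularity argument for the harder direction of the second entry is sound; note that this equivalence is also, in spirit, what the paper exploits later (Lemma \ref{lemma:main} rules out strongly continuous measures by producing too many ultrafilters). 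The one step worth making explicit is that the residual $\tilde\mu_c = \tilde\mu - \sum_{i\ge 1} r_i\delta_{x_i}$ is still outer regular at singletons, since your partition argument for the strong continuity of $\mu_0$ uses this; it follows in one line from $\tilde\mu_c(U) \le \tilde\mu(U) - \tilde\mu(\{x\})$ for any open $U \ni x$ together with outer regularity of $\tilde\mu$ at $\{x\}$, so this is a presentational gap rather than a mathematical one. Finally, as with the paper's own statement, the case $r_0 = 0$ requires the convention that the strongly continuous component is either the zero charge or an arbitrary strongly continuous measure when one exists (none exists when $\mathbb{B}$ is finite); your parenthetical handles this in the same way the paper's uniqueness caveat does. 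What your approach buys is a proof assembled entirely from standard facts (Stone duality, Riesz/Carath\'eodory extension, countability of atoms); what the direct approach of the reference buys is avoidance of any topological machinery and of the regularity bookkeeping.
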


The Sobczyk--Hammer decomposition theorem allows us to decompose any finitely additive probability measure into a single strongly continuous measure and a sum of (at most countably many) $\{0,1\}-$valued measures. We will show that if $\varphi(x;y)$ is stable in $M$, then there do not exist any strongly continuous measures on $\Def_{\varphi}(M)$. Thus every finitely additive probability measure will be the ``weighted sum" of at most countably many types. 

\subsection{Measures are sums of types}

\begin{definition} Let $\mathbb{B}$ be a Boolean algebra on a set $X$. We say that $\mathbb{B}$ has a \textit{$2-$tree} if there exists $ T \in \mathcal{P}(\mathbb{B})$ such that $(T, \supsetneq)$ is an infinite, complete, binary tree, and if $A, C \in T$, $A \not \supset C$, and $C \not \supset A$, then $A \cap C = \emptyset$.
\end{definition} 

\begin{fact}\label{ultra} Let $\mathbb{B}$ be a Boolean algebra on a set $X$ and assume that $\mathbb{B}$ has a $2-$tree. Then $|\Ult(\mathbb{B})| \geq 2^{\aleph_0}$ where $\Ult(\mathbb{B})$ is the set of ultrafilters on $\mathbb{B}$. 
\end{fact}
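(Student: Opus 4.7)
The plan is to exhibit a continuum-sized family of ultrafilters on $\mathbb{B}$ by associating one to each branch of the $2$-tree. Fix a $2$-tree $T \subseteq \mathbb{B}$ as in the hypothesis, and index its nodes by the binary tree $2^{<\omega}$ so that each $A_s \in T$ has two children $A_{s\frown 0}, A_{s\frown 1}$ satisfying $A_{s\frown i} \subsetneq A_s$ and $A_{s\frown 0} \cap A_{s\frown 1} = \emptyset$.

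The first observation I would record is that every node of $T$ is nonempty. Indeed, if $A_s = \emptyset$, then no proper subset of $A_s$ exists in $\mathbb{B}$, contradicting the fact that $A_s$ has children in $T$. Consequently, for each branch $\beta \in 2^{\omega}$, the sequence $(A_{\beta \restriction n})_{n < \omega}$ is a decreasing chain of nonempty members of $\mathbb{B}$, so the family $\{A_{\beta \restriction n} : n < \omega\}$ has the finite intersection property and generates a proper filter $F_{\beta}$ on $\mathbb{B}$. By the standard Zorn's lemma / ultrafilter lemma argument, $F_{\beta}$ extends to some ultrafilter $\mathcal{U}_{\beta} \in \Ult(\mathbb{B})$.

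The last step is to verify that the assignment $\beta \mapsto \mathcal{U}_{\beta}$ is injective. Given distinct $\beta, \beta' \in 2^{\omega}$, let $n$ be minimal with $\beta(n) \neq \beta'(n)$. Then $A_{\beta \restriction (n+1)}$ and $A_{\beta' \restriction (n+1)}$ are two distinct children of the common node $A_{\beta \restriction n}$, and the tree condition forces $A_{\beta \restriction (n+1)} \cap A_{\beta' \restriction (n+1)} = \emptyset$. Since $A_{\beta \restriction (n+1)} \in \mathcal{U}_{\beta}$ and $A_{\beta' \restriction (n+1)} \in \mathcal{U}_{\beta'}$, while no ultrafilter contains two disjoint sets, we conclude $\mathcal{U}_{\beta} \neq \mathcal{U}_{\beta'}$. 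As $|2^{\omega}| = 2^{\aleph_0}$, this gives $|\Ult(\mathbb{B})| \geq 2^{\aleph_0}$.

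There is no real obstacle here; the only point requiring a moment of care is ensuring that the nodes of $T$ are nonempty (so that the filters $F_{\beta}$ are proper), and that the branches diverge into \emph{disjoint} sets (so that the ultrafilters are distinguishable). Both follow immediately from the definition of a $2$-tree.
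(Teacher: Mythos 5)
Your proof is correct and follows essentially the same route as the paper's: extend each branch of the $2$-tree to an ultrafilter via the finite intersection property, then distinguish the resulting ultrafilters. You are in fact slightly more careful than the paper, which merely asserts injectivity of the assignment, whereas you justify it via the disjointness of diverging children and also check that the nodes are nonempty so the filters are proper.
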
 

\begin{proof} For any path $\gamma$ in $T$ and let $A_{\gamma} = \{B \in T: B \in \gamma\}$. Clearly, $A_\gamma$ has the finite intersection property (since if $B,C \in A_\gamma$, then either $B \subset C$ or $C \subset B$) and so $A_\gamma$ can be extended to an ultrafilter over $\mathbb{B}$. This construction gives an injective map from paths in $T$ into ultrafilters on $\mathbb{B}$, proving the claim. 
\end{proof}

\begin{lemma}\label{twotree} Let $\mathbb{B}$ be a Boolean algebra on a set $X$. Assume that there exists a strongly continuous measure $\mu$ over $\mathbb{B}$. Then $\mathbb{B}$ has a $2-$tree.
\end{lemma}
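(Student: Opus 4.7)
The plan is to build the $2$-tree recursively, maintaining the invariant that every node has strictly positive $\mu$-measure, and to exploit strong continuity to perform the splitting step.

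First I would establish the key splitting lemma: for any $A \in \mathbb{B}$ with $\mu(A) > 0$, there exists $B \in \mathbb{B}$ with $B \subseteq A$ and $0 < \mu(B) < \mu(A)$. To prove this, apply strong continuity with $\epsilon = \mu(A)$ to obtain a partition $\{F_1,\ldots,F_n\}$ of $X$ in $\mathbb{B}$ with $\mu(F_i) < \mu(A)$ for every $i$. The traces $\{F_i \cap A\}_{i=1}^n$ form a $\mathbb{B}$-partition of $A$, and finite additivity gives $\sum_i \mu(F_i \cap A) = \mu(A) > 0$, so some $B := F_{i_0} \cap A$ has positive measure. Monotonicity yields $\mu(B) \leq \mu(F_{i_0}) < \mu(A)$, so $B$ is the desired set. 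Setting $A_0 := B$ and $A_1 := A \setminus B$, both children lie in $\mathbb{B}$, are disjoint, and are proper subsets of $A$ of positive measure.

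Then I would iterate: starting from the root $X$ (which has $\mu(X) = 1 > 0$), inductively split every node constructed so far, producing levels $T_0 := \{X\}, T_1, T_2, \ldots$ with $|T_n| = 2^n$ and all elements of positive measure. Set $T := \bigcup_n T_n$. By construction $(T, \supsetneq)$ is an infinite complete binary tree. If $A, C \in T$ are incomparable under $\supsetneq$, let $D$ be their most recent common ancestor; then $A$ and $C$ are contained in distinct children of $D$, which are disjoint by construction, so $A \cap C = \emptyset$. Hence $T$ witnesses that $\mathbb{B}$ has a $2$-tree.

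There is no substantial obstacle here: the only delicate point is ensuring the split is \emph{strict}, and this is precisely what the bound $\mu(F_i) < \mu(A)$ supplied by strong continuity guarantees. Note that strong continuity is used repeatedly with smaller and smaller thresholds $\epsilon$ as the construction descends, since $\mu(A)$ may become arbitrarily small at deep levels of the tree; this causes no issue because strong continuity is a uniform hypothesis available for every $\epsilon > 0$.
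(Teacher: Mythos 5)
Your proof is correct and follows essentially the same approach as the paper: use strong continuity to split any positive-measure set into two disjoint positive-measure pieces in $\mathbb{B}$, and iterate to build the binary tree. The only (immaterial) difference is that you split one node at a time with $\epsilon = \mu(A)$ and take $B$ and $A \setminus B$ as the children, whereas the paper splits all leaves of a given level simultaneously using a single partition with $\epsilon$ equal to half the minimum leaf measure.
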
 

\begin{proof} Using $\mu$, we will build a $2-$tree. We build this tree in steps:

\noindent \textbf{Stage $0$:} Let $T_0 = \{X\}$. 

\noindent \textbf{Stage $n+1$:} We construct a tree of height $n+1$. Assume that $T_n$ is a (complete) binary tree of height $n$ such that for each $A \in T_{n}$, $\mu(A) > 0$. Assume furthermore that if $A, B \in T$ and $A \not \supset B$ and $B\not \supset A$, then $A \cap B = \emptyset$. We will construct $T_{n+1}$ by adding two children to each leaf. Let $\mathbb{L}_n$ be the collection of leaves on $T_n$. Let $\epsilon = \frac{\min\{\mu(L): L \in \mathbb{L}_n\}}{2}$. Since $\mu$ is strongly continuous, there exist $H_1,...,H_m \in \mathbb{B}$ such that $\mathbb{H}=\{H_1,...,H_m\}$ partitions $X$ and for each $j \leq m$, $\mu(H_j) < \epsilon$. Now fix a leaf $L_i$. Consider $L_i \cap \mathbb{H} = \{L_i \cap H : H \in \mathbb{H}\}$. We notice that $L_i \cap \mathbb{H}$ forms a partition of $L_i$. Therefore, 
\begin{equation*} 0<\mu(L_i) =\mu \Big( 
\bigcup_{K \in L_i \cap \mathbb{H}} K \Big) =\sum_{K \in L_i \cap \mathbb{H}} \mu(K).
\end{equation*}

\noindent Hence, there exists $K_r = L_i \cap H_{r} \in L_i \cap \mathbb{H}$ such that $\mu(K_r) > 0$. Furthermore,
\begin{equation*} \mu(K_r) = \mu(L_i \cap H_r) \leq \mu(H_r) < \epsilon \leq \frac{L_i}{2}.
\end{equation*}

\noindent Therefore there must exist some $K_l \in L_i \cap \mathbb{H}$ such that $K_l \neq K_r$ and $\mu(K_l) >0$. We now add $K_r, K_l$ as children to the leaf $L_i$. Let $T_{n+1}$ be the tree constructed after repeating this process for each $L \in \mathbb{L}_n$. Clearly, $T_{n+1}$ is a binary tree of height $n+1$ such that for each $A \in T_{n+1}$, $\mu(A) > 0$.

Now let $T = \bigcup_{n \geq 0} T_n$. $T$ is clearly a $2-$tree by construction.
\end{proof}

\begin{definition} Let $M_{\varphi}$ be the reduct of $M$ to the language $\mathcal{L}_{\varphi} = \{\varphi(x;y)\}$. A subset $N$ of $M$ is a $\varphi-$substructure of $M$, written $N \prec_{\varphi} M$, if the induced structure on $N$ (in the language $\mathcal{L}_{\varphi}$) is an elementary substructure of $M_{\varphi}$. 
\end{definition}

\begin{lemma}\label{lemma:main} Assume that $\varphi(x;y)$ is stable in $M$. Then there are no strongly continuous measures on $\Def_{\varphi}(M)$.
\end{lemma}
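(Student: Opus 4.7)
The plan is to derive a contradiction by producing too many $\varphi$-types over a countable substructure. Assuming for contradiction that a strongly continuous measure $\mu$ on $\Def_{\varphi}(M)$ exists, Lemma \ref{twotree} immediately supplies a $2$-tree $T \subseteq \Def_{\varphi}(M)$. Since $T$ is countable and each node is a $\varphi$-definable set using only finitely many parameters, the set $B \subseteq M^{y}$ of parameters appearing in $T$ is countable. Applying Löwenheim--Skolem to the countable-language reduct $M_{\varphi}$, I would find a countable $N \prec_{\varphi} M$ containing $B$.

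Next I would verify that stability transfers: by Fact \ref{Equiv}(ii), stability in a model is equivalent to the non-existence of a bad order configuration inside that model. Such a configuration inside $N^{x} \times N^{y}$ sits a fortiori inside $M^{x} \times M^{y}$, so stability in $M$ forces stability in $N$. Then, since $N \prec_{\varphi} M$ and all parameters of $T$ lie in $N$, each node $A \in T$ is represented by a $\varphi$-formula $\psi_{A}(x)$ over $N$, and the map $\psi_{A} \mapsto \psi_{A}(N)$ is a Boolean algebra homomorphism from $\Def_{\varphi}(M)$-formulas with parameters in $N$ to $\Def_{\varphi}(N)$ whose kernel is trivial by $\varphi$-elementarity. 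In particular, strict containment and disjointness are preserved, and $T$ descends to a genuine $2$-tree $\widetilde{T}$ in $\Def_{\varphi}(N)$. By Fact \ref{ultra}, we obtain $|S_{\varphi}(N)| = |\Ult(\Def_{\varphi}(N))| \geq 2^{\aleph_{0}}$.

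To close the argument, I would apply Theorem \ref{B}: since $\varphi$ is stable in $N$, every $p \in S_{\varphi}(N)$ admits a $\varphi^{\opp}$-definition, and the map $p \mapsto d_{p}^{\varphi^{\opp}}$ is injective because the definition determines $p$ pointwise. Therefore $|S_{\varphi}(N)| \leq |\Def_{\varphi^{\opp}}(N)| \leq \aleph_{0}$, since $N$ is countable and $\Def_{\varphi^{\opp}}(N)$ is generated as a Boolean algebra by the countable set $\{\varphi^{\opp}(y,a) : a \in N^{x}\}$. This contradicts the lower bound $2^{\aleph_{0}}$ obtained from the $2$-tree.

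The main subtlety I anticipate lies in the descent step: one must confirm that the $2$-tree structure is preserved when each node of $T$ is traced onto $N^{x}$, i.e., that nodes remain nonempty and strict inclusions remain strict. Both follow from $N \prec_{\varphi} M$ once one ensures the parameters defining $T$ are in $N$, so the bookkeeping is routine, but it is the one place where one must be careful that the measure-theoretic construction survives passage to a countable elementary slice.
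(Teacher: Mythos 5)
Your proof is correct and takes essentially the same route as the paper's: extract a $2$-tree from the strongly continuous measure via Lemma \ref{twotree}, pass to a countable $\varphi$-elementary substructure containing the relevant parameters via L\"owenheim--Skolem, and contradict the countability of $\varphi^{\opp}$-definitions supplied by Theorem \ref{B}. The descent step you flag is handled in the paper by the chain $|\Ult(\mathbb{B}_0)| \leq |\Ult(\Def_\varphi(C))| \leq |\Ult(\Def_\varphi(N))|$, which rests on the same $\varphi$-elementarity observation you make explicit.
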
 

\begin{proof} Assume that there exists a strongly continuous measure over $ \Def_{\varphi}(M)$. By Lemma \ref{twotree}, there exists a $2-$tree. Let $\mathbb{B}_0$ be the Boolean algebra generated by this $2-$tree. By Fact \ref{ultra}, $\mathbb{B}_0$ is a countable subalgebra of  $\Def_{\varphi}(M)$ such that  $\Ult(\mathbb{B}_{0}) \geq 2^{\aleph_0}$. Choose $C \subset M$ such that for each $B \in \mathbb{B}_0$, there exists $b_1,...,b_n$ in $C$ such that $B$ is an element of the boolean algebra generated by $\{\varphi(x;b_i): i \leq n\}$. Notice that since $\mathbb{B}$ is countable, we may choose $C$ to be countable. By the Downward L\"owenheim-Skolem theorem, there exists an $\mathcal{L}_\varphi-$structure $N$ such that $C \subseteq N$, $N \prec_{\varphi} M$, and $|N| = \aleph_0$. Then, 
\begin{equation*} 2^{\aleph_0}\leq |\Ult(\mathbb{B}_0)| \leq |\Ult(\Def_\varphi(C))| \leq |\Ult(\Def_\varphi(N))|=|S_\varphi(N)|.
\end{equation*}
However, since $\varphi(x;y)$ is stable in $M$, it is also stable in $N$. By Theorem \ref{B}, every $\varphi-$type over $N$ is definable by a $\varphi^{\opp}-$formula with parameters from $N$. 
Since $|N| = \aleph_0$, there are only countably many $\varphi^{\opp}-$formulas. Therefore, not every $\varphi-$type over $N$ is definable -- a contradiction. 
\end{proof} 

\begin{theorem}\label{Cl} Let $\varphi(x;y)$ be stable in $M$ and let $\mu \in \mathfrak{M}_{\varphi}(M)$. Then there exists an initial segment $I$ of $\mathbb{N}$ such that $\mu = \sum_{i \in I} r_i \delta_{p_i}$ where $p_i \in S_{\varphi}(M)$, $\sum_{i \in I} r_i = 1$, and each $r_i > 0$. Obviously, the statement also holds when $\varphi(x;y)$ is \emph{stable} (i.e., does not admit the $k$-order property for some $k$).
\end{theorem}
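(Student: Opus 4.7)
The plan is to combine the Sobczyk--Hammer decomposition with Lemma \ref{lemma:main}. First, I would apply the Sobczyk--Hammer theorem to $\mu$ on the Boolean algebra $\mathbb{B} = \Def_\varphi(M)$, producing an initial segment $I \subseteq \mathbb{N}$, non-negative weights $(r_i)_{i \in I}$ with $\sum_{i \in I} r_i = 1$, a strongly continuous probability measure $\mu_0$, and $\{0,1\}$-valued probability measures $(\mu_i)_{i \geq 1}$, satisfying $\mu = \sum_{i \in I} r_i \mu_i$.

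Second, I would rule out the strongly continuous summand. If $r_0 > 0$, then normalizing $\mu_0$ still yields a strongly continuous finitely additive probability measure on $\Def_\varphi(M)$, contradicting Lemma \ref{lemma:main}. Hence $r_0 = 0$; by the uniqueness clause of Sobczyk--Hammer (which allows discarding trivially weighted summands), we may reindex and assume $\mu = \sum_{i \in I} r_i \mu_i$ with every $\mu_i$ being $\{0,1\}$-valued on $\Def_\varphi(M)$ and every $r_i > 0$.

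Third, I would identify each such $\mu_i$ with a point mass. A $\{0,1\}$-valued finitely additive probability measure on $\Def_\varphi(M)$ is precisely (the indicator of) an ultrafilter on $\Def_\varphi(M)$, and ultrafilters on $\Def_\varphi(M)$ correspond canonically to points of $S_\varphi(M)$. So there is $p_i \in S_\varphi(M)$ with $\mu_i = \delta_{p_i}$, yielding the required representation $\mu = \sum_{i \in I} r_i \delta_{p_i}$. For the parenthetical remark, if $\varphi(x;y)$ is stable in the global sense, then it is stable in every model of the theory, in particular in $M$, so the preceding argument applies verbatim.

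I do not anticipate any real obstacle: each step is a direct invocation of a result already in hand. The only mild subtlety is the bookkeeping around whether $\mu_0$ is formally present in the decomposition, which is handled by the uniqueness-up-to-trivially-weighted-summands clause of Sobczyk--Hammer; the genuine content of the argument has already been paid for in Lemma \ref{lemma:main}.
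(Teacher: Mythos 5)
Your proposal is correct and matches the paper's argument exactly: the paper's proof is the one-line ``Direct from the Sobczyk--Hammer Decomposition Theorem and Lemma \ref{lemma:main},'' and your three steps (decompose, kill the strongly continuous summand via Lemma \ref{lemma:main}, identify $\{0,1\}$-valued measures with points of $S_\varphi(M)$) are precisely the details being left implicit. Nothing to add.
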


\begin{proof} Direct from the Sobczyk--Hammer Decomposition Theorem and Lemma \ref{lemma:main}.
\end{proof}

\subsection{The Morley product is commutative} We now aim to show that the Morley product commutes on appropriate pairs of measures. First, we need to appropriately define what we mean by the \emph{Morley product} in this context. To define it, we make some quick observations.

\begin{fact}\label{fact:function} Suppose that $X$ is a topological space and $Y$ is a dense subset of $X$. Let $f:Y \to Z$ be a map. If there exists some $\tilde{f}: X \to Z$ such that $\tilde{f}$ is continuous and $\tilde{f}|_{Y} = f$, then $\tilde{f}$ is the unique function with such property. 
\end{fact}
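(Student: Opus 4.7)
The plan is to invoke the standard uniqueness of continuous extensions to a dense subspace, which relies (implicitly) on the codomain $Z$ being Hausdorff. In the intended application of this fact $Z$ will be $[0,1]$ (the target of the Morley product / evaluation map), so this is automatic; I would simply record a Hausdorff assumption on $Z$ at the top of the proof.

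Given two continuous extensions $\tilde{f}_1, \tilde{f}_2 : X \to Z$ of $f$, the first step is to form the product map $g : X \to Z \times Z$, $g(x) = (\tilde{f}_1(x), \tilde{f}_2(x))$, which is continuous by the universal property of the product topology. Next, since $Z$ is Hausdorff, the diagonal $\Delta_Z = \{(z,z) : z \in Z\}$ is closed in $Z \times Z$, so the coincidence locus
\begin{equation*}
E \;:=\; g^{-1}(\Delta_Z) \;=\; \{x \in X : \tilde{f}_1(x) = \tilde{f}_2(x)\}
\end{equation*}
is closed in $X$. By hypothesis, $\tilde{f}_1$ and $\tilde{f}_2$ agree with $f$ on $Y$, hence $Y \subseteq E$. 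Since $Y$ is dense in $X$ and $E$ is closed, we get $E \supseteq \overline{Y} = X$, i.e., $\tilde{f}_1 = \tilde{f}_2$ on all of $X$.

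There is no genuine obstacle; the only subtlety is the implicit Hausdorff hypothesis on $Z$, which can genuinely fail to give uniqueness otherwise (take $Z$ indiscrete), but is harmless in the paper's setting. As an alternative, one could argue by nets: for each $x \in X$ pick a net in $Y$ converging to $x$, push it through both extensions, and use Hausdorffness to conclude the two limits in $Z$ coincide. This variant is slightly more concrete but strictly longer, so I would stick with the closed-preimage argument.
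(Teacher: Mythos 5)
Your proof is correct, and it is a genuinely different (and more detailed) route than the paper's: the paper simply asserts the statement ``via the net definition of continuity,'' i.e., the argument you mention only as an alternative at the end, whereas you give the closed-coincidence-locus argument, showing that $\{x \in X : \tilde{f}_1(x) = \tilde{f}_2(x)\}$ is the preimage of the diagonal $\Delta_Z$, hence closed, hence all of $X$ by density of $Y$. The two arguments buy essentially the same thing and both hinge on the same hidden hypothesis, but yours makes that hypothesis visible: the fact as literally stated is false without assuming $Z$ Hausdorff (your indiscrete-$Z$ counterexample is right), and the net version needs Hausdorffness just as much, to guarantee uniqueness of limits in $Z$. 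Since the paper only ever applies the fact with $Z = [0,1]$, the omission is harmless, but explicitly recording the Hausdorff assumption, as you do, is the more careful formulation. Your write-up is slightly longer than necessary for a statement the authors dismiss as ``clear,'' but there is no gap in it.
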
 

\begin{proof} Clear via the net definition of continuity.
\end{proof} 

\begin{proposition}\label{prop:exist} Suppose that $\varphi(x;y)$ is stable in $M$ and $\mu \in \mathfrak{M}_{\varphi}(M)$. Consider the map $f_{\mu}^{\varphi}:\{\tp_{\varphi^{\opp}}(b/M): b \in M^{y}\} \to [0,1]$ via $f_{\mu}^{\varphi}(\tp_{\varphi^{\opp}}(b/M)) = \mu(\varphi(x,b))$. This map is well-defined and there exists a unique continuous function $F_{\mu}^{\varphi}:S_{\varphi^{\opp}}(M) \to [0,1]$ such that $F_{\mu}^{\varphi}|_{\{\tp_{\varphi^{\opp}}(b/M): b \in M^{y}\}} = f_{\mu}^{\varphi}$. 
\end{proposition}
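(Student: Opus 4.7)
The plan is to separate the claim into three tasks: well-definedness of $f_\mu^\varphi$, uniqueness of any continuous extension via Fact \ref{fact:function}, and explicit construction of such an extension $F_\mu^\varphi$ using Theorems \ref{Cl} and \ref{B}.

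Well-definedness is immediate: if $\tp_{\varphi^{\opp}}(b/M) = \tp_{\varphi^{\opp}}(b'/M)$, then for every $a \in M^x$ one has $M \models \varphi(a,b) \iff M \models \varphi(a,b')$, so $\varphi(x,b)$ and $\varphi(x,b')$ define the same element of $\Def_\varphi(M)$, which is therefore assigned the same $\mu$-value. For uniqueness, I would observe that the realized types $\{\tp_{\varphi^{\opp}}(b/M): b \in M^y\}$ are dense in $S_{\varphi^{\opp}}(M)$: any non-empty basic clopen $[\psi(y)]$ of the Stone space corresponds to a non-empty $\varphi^{\opp}$-definable subset of $M^y$, which always contains a realization $b$, and then $\tp_{\varphi^{\opp}}(b/M) \in [\psi(y)]$. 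Fact \ref{fact:function} then yields uniqueness of any continuous extension.

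For existence, I would apply Theorem \ref{Cl} to write $\mu = \sum_{i \in I} r_i \delta_{p_i}$ with $p_i \in S_\varphi(M)$, each $r_i > 0$, and $\sum_{i \in I} r_i = 1$. By Theorem \ref{B}, each $p_i$ admits a $\varphi^{\opp}$-definition $d_{p_i}^{\varphi}(y) \in \Def_{\varphi^{\opp}}(M)$. I then define
$$F_\mu^\varphi(q) = \sum_{i \in I} r_i \cdot \mathbf{1}_{[d_{p_i}^{\varphi}(y)]}(q),$$
where $[d_{p_i}^{\varphi}(y)] \subseteq S_{\varphi^{\opp}}(M)$ denotes the clopen associated to the formula $d_{p_i}^{\varphi}(y)$. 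Each summand is continuous, and the partial sums converge uniformly to $F_\mu^\varphi$ by the Weierstrass $M$-test, since $\sum_i r_i = 1 < \infty$ and each indicator is bounded by $1$; hence $F_\mu^\varphi$ is continuous on $S_{\varphi^{\opp}}(M)$ and takes values in $[0,1]$. To verify that $F_\mu^\varphi$ extends $f_\mu^\varphi$, let $q_b = \tp_{\varphi^{\opp}}(b/M)$; then $\mathbf{1}_{[d_{p_i}^{\varphi}(y)]}(q_b) = 1$ iff $M \models d_{p_i}^{\varphi}(b)$ iff $\varphi(x,b) \in p_i$, whence $F_\mu^\varphi(q_b) = \sum_i r_i \delta_{p_i}(\varphi(x,b)) = \mu(\varphi(x,b)) = f_\mu^\varphi(q_b)$. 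There is no substantial obstacle here: the hard content is encapsulated in Theorems \ref{Cl} and \ref{B}, and what remains is the assembly of those tools together with the uniform convergence of the defining series.
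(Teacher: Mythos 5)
Your proposal is correct and follows essentially the same route as the paper: decompose $\mu$ via Theorem \ref{Cl}, use Theorem \ref{B} to see that each $d_{p_i}^{\varphi}(y)$ is a $\varphi^{\opp}$-formula, define $F_{\mu}^{\varphi} = \sum_{i \in I} r_i \mathbf{1}_{[d_{p_i}^{\varphi}(y)]}$, and get uniqueness from density of realized types together with Fact \ref{fact:function}. You additionally supply details the paper leaves implicit (the uniform-convergence argument for continuity and the density of realized types), and your well-definedness argument is a touch more elementary than the paper's, since it observes directly that $\varphi(x,b)$ and $\varphi(x,b')$ define the same set when $b,b'$ have the same $\varphi^{\opp}$-type, without invoking the decomposition.
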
 

\begin{proof} By Theorem \ref{Cl}, $\mu = \sum_{i \in I} r_i \delta_{p_i}$ where each $p_i \in S_{\varphi}(M)$. We argue that the map $f_{\mu}^{\varphi}$ is well-defined. Notice that if $b \in M^{y}$, then  
\begin{equation*} 
\mu(\varphi(x,b)) = \sum_{i \in I} r_i [\delta_{p_i}(\varphi(x,b))] = \sum_{\substack{i \in I \\ M\models d_{p_i}^{\varphi}(b)}} r_i. 
\end{equation*} 
By Theorem \ref{B}, each formula $d_{p_i}^{\varphi}(y)$ is a $\varphi^{\opp}-$formula which implies that the value above only depends on the $\varphi^{\opp}-$type of $b$. Well-definedness follows.

Since $\{\tp_{\varphi^{\opp}}(b/M): b \in M^{y}\}$ is a dense subset of $S_{\varphi^{\opp}}(M)$, by Fact \ref{fact:function} it suffices to prove that there exists a continuous map from $S_{\varphi^{\opp}}(M)$ to $[0,1]$ which restricts to $f_{\mu}^{\varphi}$. We claim that $\sum_{i \in I} r_i \mathbf{1}_{[d_{p_i}^{\varphi}(y)]}$ is the appropriate map. We remark that we may view $\sum_{i \in I} r_i\mathbf{1}_{[d_{p_i}^{\varphi}(y)]}$ as a map from $S_{\varphi^{\opp}}(M)$ to $[0,1]$ since stability in $M$ implies that every formula of the form $d_{p_i}^{\varphi}(y)$ is a $\varphi^{\opp}-$formula (Theorem \ref{B}). 
\end{proof}

We may now define the \emph{Morley product} in this setting.

\begin{definition} Suppose that $\varphi(x;y)$ is stable in $M$. Let $\mu \in \mathfrak{M}_{\varphi}(M)$ and $\nu \in \mathfrak{M}_{\varphi^{\opp}}(M)$. We define the Morley product of $\mu$ with $\nu$, denoted $\mu_{x} \otimes \nu_{y}$, as follows:
\begin{equation*} 
(\mu \otimes \nu)(\varphi(x,y)) = \int_{S_{\varphi^{\opp}}(M)} F_{\mu}^{\varphi} d\tilde{\nu},
\end{equation*} 
where $F_{\mu}^{\varphi}$ is the function from Proposition \ref{prop:exist} and $\tilde{\nu}$ is the regular Borel probability measures corresponding to $\nu$. Likewise, since $\varphi(x;y)$ is stable in $M$ if and only if $\varphi^{\opp}(x;y)$ is stable in $M$, we may also define
\begin{equation*} 
(\nu \otimes \mu)(\varphi(x,y)) = \int_{S_{\varphi}(M)} F_{\nu}^{\varphi^{\opp}} d\tilde{\mu}, 
\end{equation*} 
with the obvious analogous definitions. 
\end{definition} 

\begin{remark}\label{remark:used} Since our definition of the Morley product is slightly non-standard, we are carful to make sure it resembles the normal Morley product on types. Suppose that $\varphi(x;y)$ is stable in $M$, let $p \in S_{\varphi}(M)$, $q \in S_{\varphi^{\opp}}(M)$, and fix $\mathcal{U}$ such that $M \prec \mathcal{U}$. Let $\hat{p} \in S_{\varphi}(\mathcal{U})$ be the unique $M$-definable extension of $p$ to $\mathcal{U}$. Then $(\delta_{p} \otimes \delta_{q})(\varphi(x,y)) = 1$ if and only if $\mathcal{U} \models \varphi(a,b)$ where $b \models q$ and $a \models \hat{p}|_{Mb}$. Indeed, consider the following sequence of bi-implications: 
\begin{align*} 
\left( \delta_{p} \otimes \delta_{q} \right)(\varphi(x,y)) = 1 &\Longleftrightarrow  \int_{S_{\varphi^{\opp}}(M)} \chi_{[d_{p}^{\varphi}(y)]}d\delta_{q} = 1 \Longleftrightarrow \delta_{q}(d_{p}^{\varphi}(y)) = 1\\
&\Longleftrightarrow d_{p}^{\varphi}(y) \in q  \Longleftrightarrow \mathcal{U} \models d_{p}^{\varphi}(b) \Longleftrightarrow \varphi(x,b) \in \hat{p} \Longleftrightarrow \mathcal{U} \models \varphi(a,b). 
\end{align*} 
\end{remark} 

\begin{theorem}\label{theorem:fubi} Suppose that $\varphi(x;y)$ is stable in $M$. Then 
\begin{equation*} 
(\mu \otimes \nu)(\varphi(x;y)) = (\nu \otimes \mu)(\varphi(x;y)). 
\end{equation*} 
\end{theorem}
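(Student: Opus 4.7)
The strategy is to unfold both sides of the desired equality into absolutely convergent double sums indexed by the atoms of the Sobczyk--Hammer decompositions of $\mu$ and $\nu$, and then invoke the symmetry built into Theorem \ref{B} together with Tonelli's theorem (applied trivially, since all terms are non-negative) to exchange the order of summation.

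More concretely, I first apply Theorem \ref{Cl} to both measures to write
\[
\mu = \sum_{i \in I} r_i \delta_{p_i}, \qquad \nu = \sum_{j \in J} s_j \delta_{q_j},
\]
with $p_i \in S_{\varphi}(M)$, $q_j \in S_{\varphi^{\opp}}(M)$, and $r_i, s_j \geq 0$ summing to one. The corresponding regular Borel representatives $\tilde{\mu}$ and $\tilde{\nu}$ are then pure-atomic of the same form. By the proof of Proposition \ref{prop:exist}, the continuous function $F_{\mu}^{\varphi}$ coincides with $\sum_{i \in I} r_i \mathbf{1}_{[d_{p_i}^{\varphi}(y)]}$, and similarly $F_{\nu}^{\varphi^{\opp}} = \sum_{j \in J} s_j \mathbf{1}_{[d_{q_j}^{\varphi^{\opp}}(x)]}$.

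Plugging into the definition of the Morley product and integrating against the pure-atomic measure $\tilde{\nu}$ gives
\[
(\mu \otimes \nu)(\varphi(x,y)) = \sum_{j \in J} s_j F_{\mu}^{\varphi}(q_j) = \sum_{j \in J} \sum_{i \in I} r_i s_j \, \mathbf{1}\bigl[d_{p_i}^{\varphi}(y) \in q_j\bigr],
\]
and symmetrically,
\[
(\nu \otimes \mu)(\varphi(x,y)) = \sum_{i \in I} \sum_{j \in J} r_i s_j \, \mathbf{1}\bigl[d_{q_j}^{\varphi^{\opp}}(x) \in p_i\bigr].
\]
Both double series have non-negative terms bounded by $r_i s_j$, hence converge absolutely, and Tonelli's theorem (in its elementary form for non-negative double series) permits swapping the order of summation freely.

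The final ingredient is Ben Yaacov's symmetry statement from Theorem \ref{B}: for each pair $(i,j)$ we have $d_{p_i}^{\varphi}(y) \in q_j$ if and only if $d_{q_j}^{\varphi^{\opp}}(x) \in p_i$, so the two doubly-indexed sums are termwise equal. Hence they coincide, proving the theorem. No step is really an obstacle here; the only subtlety is checking that Proposition \ref{prop:exist} genuinely yields the pointwise formula $F_{\mu}^{\varphi} = \sum_i r_i \mathbf{1}_{[d_{p_i}^{\varphi}(y)]}$ on all of $S_{\varphi^{\opp}}(M)$ and not merely on the dense set of realized types, which is guaranteed by the uniqueness clause in Fact \ref{fact:function} together with the fact that the displayed sum defines a continuous function on $S_{\varphi^{\opp}}(M)$ (because each $d_{p_i}^{\varphi}(y)$ is a $\varphi^{\opp}$-formula by Theorem \ref{B}, and uniform convergence of the partial sums follows from $\sum r_i < \infty$).
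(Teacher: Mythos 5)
Your proof is correct and follows essentially the same route as the paper's: decompose both measures into weighted sums of types via Theorem \ref{Cl}, unfold the Morley product integral into a double sum, and apply the symmetry of Theorem \ref{B} termwise (the paper leaves the interchange of summation implicit where you invoke Tonelli, but the argument is the same).
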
 
\begin{proof}
Consider the following sequence of equations:
\begin{align*}
(\mu \otimes \nu)(\varphi(x,y)) &= \int_{S_{\varphi^{\opp}}(M)} F_{\mu}^{\varphi}d\tilde{\nu} = \int_{S_{\varphi^{\opp}}(M)} \sum_{i \in I} r_i \mathbf{1}_{[d_{p_i}^{\varphi}(y)]}d\tilde{\nu} \\
&=  \sum_{i \in I} r_i \nu(d_{p_i}^{\varphi}(y)) =  \sum_{i \in I} r_i \sum_{j \in J} s_j \delta_{q_j}(d_{p_i}^{\varphi}(y)) \overset{(*)}{=}   \sum_{i \in I} \sum_{j \in J} r_is_j\delta_{p_i}(d_{q_j}^{\varphi^{\opp}}(x)).
\end{align*} 
Equation $(*)$ is justified by Theorem \ref{B}. A symmetric computation shows 
\begin{equation*} 
(\nu \otimes \mu)(\varphi(x,y)) = \sum_{i \in I} \sum_{j \in J} r_is_j\delta_{p_i}(d_{q_j}^{\varphi^{opp}}(x)), 
\end{equation*} 
completing the proof. 
\end{proof}

\subsection{Some functional analysis and double limits} By Theorem \ref{theorem:fubi}, we may define the following evaluation map, $E_{\varphi}$, on appropriate pairs of Keisler measures. We prove that if $\varphi(x;y)$ is stable in $M$, then $E_{\varphi}$ also has the double limit property. Our proof follows directly from classical results in functional analysis, namely Grothendieck's double limit theorem and the Krein-Smulian theorem. We first recall the definition of the evaluation map from the introduction. 

\begin{definition} Suppose that $\varphi(x,y)$ is stable in $M$. Then we define the map $E_{\varphi}:\mathfrak{M}_{\varphi}(M) \times \mathfrak{M}_{\varphi^{\opp}}(M) \to [0,1]$ via 
\begin{equation*} 
E_{\varphi}(\mu,\nu) = (\mu \otimes \nu)(\varphi(x,y)).
\end{equation*} 
By Theorem \ref{theorem:fubi}, $E_{\varphi}(\mu,\nu)$ is also equal to $(\nu \otimes \mu)(\varphi(x,y))$.  
\end{definition}

A proof of the following theorem can be found in \cite[Theorem 13.4]{JC}. 

\begin{theorem}[Krein-Smulian Theorem]\label{dl} If $Y$ is a Banach space and $K$ is weakly compact subset of $Y$, then the closed convex hull of $K$, denoted $\co(K)$, is weakly compact. The closed convex hull of $K$ is the intersection of all norm closed, convex subsets of $Y$ containing $K$. 
\end{theorem}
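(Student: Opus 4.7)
The plan is to reduce the weak compactness of $\co(K)$ in $Y$ to a weak-$*$ compactness statement in the bidual $Y^{**}$ via the canonical isometric embedding $J\colon Y \hookrightarrow Y^{**}$, and then to show that the weak-$*$ closed convex hull of $J(K)$ is in fact contained in $J(Y)$.

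First, since $K$ is weakly compact it is weakly bounded, hence norm bounded by the uniform boundedness principle; let $R > 0$ satisfy $K \subseteq B_R$, so $\co(K) \subseteq B_R$ as well. By Mazur's theorem the norm-closed convex set $\co(K)$ is already weakly closed, so it suffices to exhibit a weakly compact superset of $\co(K)$ in $Y$. Because the restriction of the weak-$*$ topology $\sigma(Y^{**}, Y^{*})$ to $J(Y)$ coincides with the weak topology on $Y$, the task reduces to producing a weak-$*$ compact subset $D \subseteq Y^{**}$ with $J(\operatorname{conv}(K)) \subseteq D \subseteq J(Y)$. Banach--Alaoglu applied to the ball of radius $R$ in $Y^{**}$ gives that the weak-$*$ closure $D$ of $J(\operatorname{conv}(K))$ in $Y^{**}$ is already weak-$*$ compact.

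The crux is to show $D \subseteq J(Y)$. I would represent each $T \in D$ as the barycenter of a regular Borel probability measure on $K$, where $K$ is viewed as a compact Hausdorff space under its weak topology. Concretely, writing $T$ as a weak-$*$ limit of a net of convex combinations $\sum_i \lambda_i^{(\alpha)} J(x_i^{(\alpha)})$ with $x_i^{(\alpha)} \in K$, the associated finitely supported probability measures $\mu_\alpha = \sum_i \lambda_i^{(\alpha)} \delta_{x_i^{(\alpha)}}$ on $K$ admit, by Riesz representation combined with Banach--Alaoglu for $C(K)^*$, a weak-$*$ subnet limit $\mu$; one then checks $\langle T, f\rangle = \int_K \langle x, f\rangle\, d\mu(x)$ for every $f \in Y^{*}$. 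The remaining and essential step is to show that the barycenter of $\mu$ actually lies in $Y$, which can be verified by a weak-topology partition-of-unity approximation on $K$ (equivalently, by existence of the Pettis integral of the inclusion $K \hookrightarrow Y$ paired against finite-dimensional subspaces of $Y^{*}$).

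The main obstacle is precisely this last identification: a priori the weak-$*$ limit $T \in Y^{**}$ need not belong to $J(Y)$, and it is the weak compactness of $K$ — not just norm boundedness of $\co(K)$ — that forces the barycenter back into $Y$. An alternative route, avoiding the bidual, applies the Eberlein--\v{S}mulian theorem to extract weakly convergent subsequences from $K$ and then uses diagonal convex-combination arguments to build weakly convergent subsequences inside $\co(K)$; this is more combinatorial but rests on the same underlying approximation phenomenon and yields the same conclusion.
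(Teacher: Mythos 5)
The paper does not actually prove this theorem; it states it and cites \cite[Theorem 13.4]{JC}, so your attempt can only be measured against the standard argument in that reference --- which is essentially the barycentric proof you outline. Your reductions are all sound: weak compactness gives norm boundedness of $K$ via uniform boundedness, Mazur's theorem makes $\co(K)$ weakly closed so that it suffices to trap it inside some weakly compact set, Banach--Alaoglu makes the weak-$*$ closure $D$ of $J(\operatorname{conv}(K))$ compact in $Y^{**}$, and you correctly isolate the crux as showing $D \subseteq J(Y)$, equivalently that the barycenter of a regular Borel probability measure on $(K,\mathrm{weak})$ lies in $Y$ and not merely in $Y^{**}$.

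The problem is that neither justification you offer for that crux actually closes it. A partition of unity subordinate to a weakly open cover of $K$ produces Riemann sums $y_{\mathcal{U}}=\sum_i \mu(\psi_i)\,x_i \in \operatorname{conv}(K)$ with $\langle f, y_{\mathcal{U}}\rangle \to \int_K f\,d\mu$ for each fixed $f\in Y^{*}$; but that only re-establishes that the barycenter $T_\mu$ lies in the weak-$*$ closure of $J(\operatorname{conv}(K))$, which is the hypothesis you started from. Since $J(Y)$ is not weak-$*$ closed in $Y^{**}$ unless $Y$ is reflexive, weak-$*$ approximation by elements of $Y$ proves nothing about membership in $J(Y)$. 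Similarly, invoking ``existence of the Pettis integral of the inclusion $K\hookrightarrow Y$'' is invoking the conclusion: for a general bounded weakly measurable function that integral need not exist in $Y$, and it is exactly the weak compactness of $K$ that must be used to produce it. The missing argument (and the one in the cited reference) runs: reduce to the case of separable $Y$ via Eberlein--\v{S}mulian, since any sequence in $\operatorname{conv}(K)$ lies in the closed linear span of countably many points of $K$; in the separable case the dual ball is weak-$*$ metrizable, so the dominated convergence theorem shows that $f\mapsto \int_K f\,d\mu$ is weak-$*$ sequentially continuous, hence weak-$*$ continuous, on the ball of $Y^{*}$; and then the Banach--Dieudonn\'e/Krein--\v{S}mulian criterion for the bidual (a functional in $Y^{**}$ that is weak-$*$ continuous on the dual ball comes from $Y$) finishes the identification. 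As written, your proposal is an accurate table of contents for the proof, but the step that carries all the weight is asserted rather than proved, and the two hints given for it do not work.
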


\begin{corollary} If $Y$ is a Banach space and $Z$ is a relatively weakly compact subset of $Y$ , then $\co(Z)$ is a weakly compact subset of $Y$. 
\end{corollary}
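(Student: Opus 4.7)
The plan is to deduce the corollary from Theorem \ref{dl} (Krein-Smulian) by passing to the weak closure of $Z$. First I would set $K := \overline{Z}^{w}$, the weak closure of $Z$ in $Y$; by the hypothesis that $Z$ is relatively weakly compact, $K$ is weakly compact. Applying Theorem \ref{dl} directly to $K$, I obtain that $\co(K)$ is weakly compact in $Y$.

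Next, since $Z \subseteq K \subseteq \co(K)$ and $\co(K)$ is itself a norm-closed convex set containing $Z$, the intersection characterization of the closed convex hull gives $\co(Z) \subseteq \co(K)$. To finish, it suffices to show that $\co(Z)$ is weakly closed, as a weakly closed subset of a weakly compact set is weakly compact. For this I would invoke Mazur's theorem: every norm-closed convex subset of a Banach space is automatically weakly closed. Since $\co(Z)$ is by definition norm-closed and convex, it is weakly closed, and therefore weakly compact as a weakly closed subset of $\co(K)$.

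The argument is essentially routine once the right reduction is spotted; the only subtle point is the appeal to Mazur's theorem, which is needed to reconcile the norm-closed notion of closed convex hull (as stated in Theorem \ref{dl}) with the weak topology in which we want $\co(Z)$ to be closed. Without that step one might worry that $\co(Z)$ is merely a norm-closed, and not weakly closed, subset of the weakly compact set $\co(K)$.
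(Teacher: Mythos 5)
Your proposal is correct and follows essentially the same route as the paper: pass to the weak closure $Z^{w}$, apply Krein--Smulian to get $\co(Z^{w})$ weakly compact, and conclude via $\co(Z) \subseteq \co(Z^{w})$. In fact your explicit appeal to Mazur's theorem to upgrade ``norm-closed convex'' to ``weakly closed'' is a point the paper's proof glosses over (it simply says $\co(Z)$ is a closed subset of a compact set), so your write-up is, if anything, slightly more careful.
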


\begin{proof} Let $Z^{w}$ denote the weak closure of $Z$. Then $Z^{w}$ is weakly compact and so by the Krein-Smulian theorem, $\co(Z^{w})$ is weakly compact. Note that $\co(Z) \subseteq \co(Z^{w})$ and since $\co(Z)$ is a closed subset of a compact set, it is also compact. 
\end{proof} 

\begin{definition} Suppose that $X$ is a set. If we endow $X$ with the discrete topology and let $\mathcal{M}(X)$ be the collection of regular Borel probability measures on $X$, then we can consider the following convex collection of measures, 
\begin{equation*}
\conv(X) \colon = \left\{ \sum_{i \in I} r_i \delta_{x_i}: I \subseteq \mathbb{N}; x_i \in X; r_i \in \mathbb{R}_{\geq 0}; \sum_{i \in I} r_i = 1 \right\}.
\end{equation*} 
For simplicity of notation, we write $\sum_{i \in I} r_i \delta_{x_i}$ simply as $\sum_{i \in I} r_i x_i$.
\end{definition}

\begin{definition} Suppose that $X$ and $Y$ are sets and $f: X \times Y \to [0,1]$. Then we define $f_c:\conv(X) \times \conv(Y) \to [0,1]$ via $$ f_c\left(\sum_{i \in I} r_ix_i, \sum_{j \in J} s_jy_j \right) = \sum_{i \in I} \sum_{j \in J} r_is_jf(x_i,y_i).$$
\end{definition} 

We now prove the key combinatorial proposition via functional analysis.

\begin{proposition}\label{CDLT} Suppose that $X$ and $Y$ are sets and $f:X\times Y \to [0,1]$. Then $f$ has the double limit property if and only if $f_c:\conv(X) \times \conv(Y) \to [0,1]$ has the double limit property. 
\end{proposition}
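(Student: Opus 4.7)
The easy direction is immediate: identifying $X \hookrightarrow \conv(X)$ and $Y \hookrightarrow \conv(Y)$ via Dirac measures, one has $f(x,y) = f_c(\delta_x, \delta_y)$, so the double limit property of $f_c$ restricts to $f$. The substantive content is the forward direction, which I would prove by a two-step application of Grothendieck's theorem combined with Krein--Smulian.

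Assume $f$ has the double limit property. First, endow $X$ with the discrete topology so that every bounded function on $X$ lies in $C_b(X)$. The family $A = \{f(\cdot, y) : y \in Y\}$ is uniformly bounded by $1$, and the double limit property of $f$ is exactly condition $(ii)$ of Theorem \ref{GR} with $X_0 = X$; hence $A$ is relatively weakly compact in $C_b(X)$, and by Krein--Smulian the norm-closed convex hull $\co(A)$ is weakly compact. Next I would show that for every $\nu = \sum_{j \in J} s_j y_j \in \conv(Y)$, the function $g_\nu(x) := \sum_j s_j f(x, y_j) = f_c(\delta_x, \nu)$ lies in $\co(A)$: this is immediate when $|J|$ is finite, and for countably infinite $J$ it follows because the normalized partial sums $(1/S_n)\sum_{j \leq n} s_j f(\cdot, y_j) \in \operatorname{conv}(A)$ (with $S_n = \sum_{j \leq n} s_j$) converge to $g_\nu$ uniformly, since $S_n \to 1$ and $|f| \leq 1$. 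Applying Theorem \ref{GR} in the direction $(i) \Rightarrow (ii)$ to $\co(A)$ then yields the double limit property of the map $f'(x, \nu) := g_\nu(x) = f_c(\delta_x, \nu)$ on $X \times \conv(Y)$.

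To finish I would repeat the same recipe one level up. Equip $Y' := \conv(Y)$ with the discrete topology; the family $B = \{f'(x, \cdot) : x \in X\} \subseteq C_b(Y')$ is uniformly bounded, and the double limit property of $f'$ established in the previous paragraph is exactly condition $(ii)$ of Theorem \ref{GR} for $B$, so $B$ is relatively weakly compact and by Krein--Smulian $\co(B)$ is weakly compact. For $\mu = \sum_k r_k x_k \in \conv(X)$ the function $h_\mu(\nu) := \sum_k r_k f'(x_k, \nu) = f_c(\mu, \nu)$ lies in $\co(B)$ by the same normalized-truncation argument. One last application of $(i) \Rightarrow (ii)$ in Theorem \ref{GR} delivers the double limit property of $(\mu, \nu) \mapsto f_c(\mu, \nu)$ on $\conv(X) \times \conv(Y)$.

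The main technical obstacle is the two membership statements $g_\nu \in \co(A)$ and $h_\mu \in \co(B)$, which are needed because $\conv$ allows countably infinite convex combinations whereas the algebraic convex hull $\operatorname{conv}$ only allows finite ones. Everything rests on the uniform-norm estimate that normalized truncations of a countable convex combination approximate the full sum; this bridges the combinatorial notion of $\conv$ with the functional-analytic closed convex hull and lets infinite sums be fed into the Grothendieck/Krein--Smulian machinery.
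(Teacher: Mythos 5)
Your proof is correct and follows essentially the same route as the paper's: two rounds of Grothendieck's double limit theorem combined with Krein--Smulian, upgrading one coordinate to $\conv$ at a time (you start with the $Y$ side, the paper with the $X$ side, which is immaterial by symmetry). The one place you go beyond the paper is your explicit verification, via uniformly convergent normalized truncations, that countably infinite convex combinations such as $g_\nu$ actually lie in the norm-closed convex hull $\co(A)$ --- a step the paper's proof uses implicitly without justification, so this is a worthwhile addition rather than a deviation.
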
 

\begin{proof} 
If $f_c$ has the double limit property, then clearly $f$ has the double limit property. Therefore, we only need to prove the other direction.
Endow $Y$ with the discrete topology. Let $\mathbf{X} =\{f(a,y): a \in X\}$. It is obvious that  $\mathbf{X} \subset C_b(Y)$ and $\mathbf{X}$ is $||\cdot||_{\infty}-$bounded. By assumption, $f$ has the double limit property and by Theorem \ref{GR}, $\mathbf{X}$ is relatively weakly compact in $C_b(Y)$. 
By Corollary \ref{dl}, we have that $\co(\mathbf{X})$ is a weakly compact subset of $C_b(Y)$. Since $\co(\mathbf{X})$ is weakly compact in $C_b(Y)$, it is also relatively weakly compact, and so we can apply Theorem \ref{GR}. So, for any infinite sequences $g_i \in \co(\mathbf{X})$ and $b_j \in Y$

\begin{equation*}\lim_i \lim_j g_i(b_j) = \lim_j \lim_i g_i(b_j),
\end{equation*}
\noindent provided both limits exist. In particular, this implies that for $g_i := \sum_{\ell_i \in L_i} r_{\ell_i} f(a_{\ell_i},y)$,

\begin{equation*} 
\lim_i \lim_j f_c\left(\sum_{\ell_i \in L_i} r_{\ell_i} a_{\ell_i},b_j\right) = \lim_j \lim_i f_c\left(\sum_{\ell_i \in L_i} r_{\ell_i} a_{\ell_i},b_j\right),
\end{equation*} 
provided both limits exist. 

Notice that the computation above demonstrates that the map $f_c|_{\conv(X) \times Y}$ has the double limit property. Now consider $\conv(X)$ endowed with the discrete topology and let $\mathbf{Y} = \{f_c(x,b): b \in Y\}$.  It is clear that $\mathbf{Y} \subset C_b(\conv(X))$ and that $\mathbf{Y}$ is $||\cdot||_{\infty}-$bounded since each function is bounded by $1$. Since $f_c|_{\conv(X)\times Y}$ has the double limit property, we can again apply Theorem \ref{GR} and so $\mathbf{Y}$ is relatively weakly compact in $C_b(\conv(X))$. By the Corollary \ref{dl}, $\co(\mathbf{Y})$ is weakly compact in $C_b(\conv(X))$. By Theorem \ref{GR}, $f_c$ has the double limit property.
\end{proof}

\begin{corollary}\label{ML} If $\varphi(x;y)$ is stable in $M$, then the map $E_{\varphi}: \mathfrak{M}_{\varphi}(M) \times \mathfrak{M}_{\varphi^{*}}(M) \to [0,1]$ has the double limit property. 
\end{corollary}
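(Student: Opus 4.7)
The plan is to recognize that essentially all the machinery is already in place: $E_\varphi$ is the extension-by-convex-combinations of $\chi_\varphi$ from Fact \ref{Equiv}, and the combinatorial proposition just proved (Proposition \ref{CDLT}) tells us such an extension preserves the double limit property. So I would reduce the corollary to applying Proposition \ref{CDLT} with $X = S_\varphi(M)$, $Y = S_{\varphi^{\opp}}(M)$ and $f = \chi_\varphi$.

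First I would observe that by Theorem \ref{Cl}, every $\mu \in \mathfrak{M}_\varphi(M)$ can be written as $\mu = \sum_{i \in I} r_i \delta_{p_i}$ for some $p_i \in S_\varphi(M)$, so that $\mathfrak{M}_\varphi(M) \subseteq \conv(S_\varphi(M))$ under the identification introduced before Proposition \ref{CDLT}; the analogous inclusion $\mathfrak{M}_{\varphi^{\opp}}(M) \subseteq \conv(S_{\varphi^{\opp}}(M))$ holds symmetrically. Then I would verify that under these inclusions, $E_\varphi$ is literally the restriction of $(\chi_\varphi)_c$. This is an immediate rerun of the computation in the proof of Theorem \ref{theorem:fubi}: for $\mu = \sum_i r_i \delta_{p_i}$ and $\nu = \sum_j s_j \delta_{q_j}$,
\begin{equation*}
E_\varphi(\mu,\nu) \;=\; \sum_{i}\sum_{j} r_i s_j\, \delta_{p_i}(d_{q_j}^{\varphi^{\opp}}(x)) \;=\; \sum_{i}\sum_{j} r_i s_j\, \chi_\varphi(p_i,q_j) \;=\; (\chi_\varphi)_c(\mu,\nu),
\end{equation*}
where the middle equality uses that $\chi_\varphi(p_i,q_j) = 1$ iff $d_{q_j}^{\varphi^{\opp}}(x) \in p_i$ (Fact \ref{Equiv}$(iii)$ together with Theorem \ref{B}).

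Next I would invoke Fact \ref{Equiv}: since $\varphi(x;y)$ is stable in $M$, the map $\chi_\varphi : S_\varphi(M) \times S_{\varphi^{\opp}}(M) \to \{0,1\}$ has the double limit property. By Proposition \ref{CDLT} applied to $\chi_\varphi$, the convex-combination extension $(\chi_\varphi)_c$ on $\conv(S_\varphi(M)) \times \conv(S_{\varphi^{\opp}}(M))$ also has the double limit property. Finally, the double limit property is trivially inherited by restrictions to subsets, so $E_\varphi = (\chi_\varphi)_c|_{\mathfrak{M}_\varphi(M) \times \mathfrak{M}_{\varphi^{\opp}}(M)}$ has it as well.

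The hard part is not the proof itself but checking that the identifications line up cleanly — in particular that the expression for $E_\varphi$ used in Theorem \ref{theorem:fubi} matches the formula defining $(\chi_\varphi)_c$, and that writing a measure as a weighted sum of types realizes it as an element of $\conv$ in the sense of the preceding definition. Everything substantive (Theorem \ref{Cl}, Theorem \ref{B}, Fact \ref{Equiv}, Proposition \ref{CDLT}) is already available, so no further functional-analytic input is required at this step.
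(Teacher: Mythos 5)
Your proposal is correct and follows essentially the same route as the paper: identify $\mathfrak{M}_{\varphi}(M)$ with $\conv(S_{\varphi}(M))$ via Theorem \ref{Cl}, observe from the computation in Theorem \ref{theorem:fubi} that $E_{\varphi} = (\chi_{\varphi})_{c}$, and then combine Fact \ref{Equiv} with Proposition \ref{CDLT}. The only cosmetic difference is that you state an inclusion into $\conv(S_{\varphi}(M))$ and restrict, where the paper asserts equality; either suffices.
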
 

\begin{proof} By Fact \ref{Equiv}, the map 
$\chi_{\varphi}:S_{\varphi}(M) \times S_{\varphi^{\opp}}(M) \to \{0,1\}$
 has the double limit property. By Theorem \ref{Cl}, we have that 
$\mathfrak{M}_{\varphi}(M) = \conv(S_{\varphi}(M))$ and since $\varphi^{\opp}$ is also stable in $M$, 
$\mathfrak{M}_{\varphi^{\opp}}(M)= \conv(S_{\varphi^{\opp}})$. The computation in Theorem \ref{theorem:fubi} demonstrates that  $E_{\varphi} = (\chi_{\varphi})_{c}$. By Proposition \ref{CDLT}, $E_{\varphi}$ has the double limit property. 
\end{proof}

\section{Proper stability and the order property}

In this section, we work with \emph{honest-to-goodness} stable formulas and give a proof of an implicit theorem of Ben Yaacov and Keisler. Another proof of this theorem is given by Khanaki and  Pourmahdian using indiscernible arrays (see \cite[Theorem 3.11]{khanaki2024simple}). We show that if $\varphi(x;y)$ is stable, then the evaluation map $E_{\varphi}$ does not witness the continuous logic analogue of the order property. Throughout this section, we fix $\mathcal{L}-$structures $M$ and $\mathcal{U}$ such that $M \prec \mathcal{U}$ and $\mathcal{U}$ is a monster model. We let $T$ be the theory of $M$ in the language $\mathcal{L}$. We first show how to use the randomization to derive a proof. We then give another proof using the VC theorem. Given a theory $T$, we denote by $T^R$ its randomization. We refer the reader to Section 3.2 of \cite{conant2023generic} for background and notation regarding the randomization.

The following fact is due to Ben Yaacov and Keisler \cite[Theorem 5.14]{ben2009randomizations}.

\begin{fact}\label{fact:BK} Suppose that $\varphi(x;y)$ is a stable formula with respect to $T$. Then the randomized formula $\mathbb{E}[\varphi(x;y)]$ is stable (in the sense of continuous logic) with respect to $T^{R}$. In other words, if $N$ is a model of $T^{R}$ then for every $r \in (0,1)$ and $\epsilon > 0$, there exists some integer $n = n(\epsilon,r)$ such that there does not exist an array of elements $(\mathbf{a}_i,\mathbf{b}_j)_{(i,j) \in [n] \times [n]}$ from $N^{x} \times N^{y}$ such that 
\begin{equation*} 
\mathbb{E}[\varphi(\mathbf{a}_i,\mathbf{b}_j)] \geq r + \epsilon \text{ whenver } i \geq j, 
\end{equation*}  
and 
\begin{equation*} 
\mathbb{E}[\varphi(\mathbf{a}_i,\mathbf{b}_j)] \leq r \text{ whenver } i < j.
\end{equation*}  
Note that the integer $n$ does not depend on the choice of model, $N$. 
\end{fact}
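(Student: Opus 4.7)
The plan is to reduce the desired uniform bound for $\mathbb{E}[\varphi]$ to the classical $k$-order bound for $\varphi$, via the canonical representation of models of $T^R$ as spaces of random elements over an atomless probability space. I would argue by contradiction: if no uniform $n = n(\epsilon,r)$ existed, then by compactness in continuous logic there would exist a single model $N^\ast \models T^R$ together with infinite sequences $(\mathbf{a}_i)_{i<\omega}$ and $(\mathbf{b}_j)_{j<\omega}$ in $N^\ast$ realising the $(r,\epsilon)$-order pattern on every initial segment. Uniformity of $n(\epsilon,r)$ across all models of $T^R$ then transfers back by a further standard compactness argument once the non-existence of such infinite sequences is proved.

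I would then pass to the canonical representation of $N^\ast$ as a sufficiently saturated space of random elements over an atomless probability space $(\Omega, \mathcal{B}, \mu)$, valued in a monster model $\mathcal{U}$ of $T$. Writing $A_{ij} = \{\omega \in \Omega : \mathcal{U} \models \varphi(\mathbf{a}_i(\omega), \mathbf{b}_j(\omega))\}$, one has $\mathbb{E}[\varphi(\mathbf{a}_i, \mathbf{b}_j)] = \mu(A_{ij})$, so the contradiction hypothesis reads $\mu(A_{ij}) \geq r+\epsilon$ for $i \geq j$ and $\mu(A_{ij}) \leq r$ for $i < j$. The task is then: extract from this measure-algebra data a classical instance of the order property for $\varphi$ inside $\mathcal{U}$ of arbitrary length $k$.

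This extraction step is the main obstacle. My plan is two-stage. First, apply the infinite Ramsey theorem to pass to subsequences on which $(i,j) \mapsto \mu(A_{ij})$ takes only two values $p_\geq \geq r+\epsilon$ and $p_< \leq r$. Second, exploit the strict gap $p_\geq - p_< \geq \epsilon$ together with atomlessness of $\mu$ to locate a point $\omega^\ast \in \Omega$ and a subsequence of length $k$ on which the indicator pattern $\mathbf{1}_{A_{ij}}(\omega^\ast)$ matches the linear order exactly; this supplies a classical $k$-order for $\varphi$ in $\mathcal{U}$ and contradicts stability of $\varphi$. The genuine difficulty is in the second stage, since the naive averaging bound on the number of ``violations'' of the target pattern is too weak when $r$ is close to $1$. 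I would obtain this step either by a Ramsey/removal argument on approximately-ordered $0/1$ matrices, or (the slicker route, clearly foreshadowed by the paper) by invoking the VC theorem on the family $\{\mathbf{1}_{A_{ij}}\}$, whose VC dimension is controlled by the finite VC dimension of $\varphi$ itself since stability implies NIP.
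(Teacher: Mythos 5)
Your reduction -- compactness to a single model of $T^{R}$ carrying an infinite $(r,\epsilon)$-ordered array, representation over an atomless probability space $(\Omega,\mathcal{B},\mu)$ with values in a monster model of $T$, and the translation $\mathbb{E}[\varphi(\mathbf{a}_i,\mathbf{b}_j)]=\mu(A_{ij})$ -- is the right setup; note the paper itself gives no proof of this Fact (it is cited from Ben Yaacov--Keisler), and the closest in-paper argument is the alternative proof of Proposition \ref{prop:order}. The gap is exactly where you flag it, and neither of your proposed repairs closes it. First, producing a \emph{single} point $\omega^{*}$ at which the pattern $\mathbf{1}_{A_{ij}}(\omega^{*})$ is exactly ordered on some $k\times k$ sub-array is not a consequence of the measure data: you need a point of $\bigcap_{l\geq m}A_{i_l j_m}\cap\bigcap_{l<m}A_{i_l j_m}^{c}$, a finite intersection of sets each of measure bounded away from $1$, and such intersections can be empty for every choice of sub-array. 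Averaging only yields an $\omega^{*}$ whose $0/1$ matrix has lower-triangular density exceeding upper-triangular density by $\epsilon$, and upgrading that density discrepancy to an exact half-graph of height $k$ is precisely the finitary content of the theorem being proved; Ramsey does not deliver it. Second, the VC fallback rests on a false claim: the family $\{A_{ij}\}$ of subsets of $\Omega$ does \emph{not} have VC dimension controlled by that of $\varphi$. Already for $\varphi(x;y):=(x=y)$ (stable, VC dimension $1$), one can choose step functions $\mathbf{a}_m,\mathbf{b}_m$ agreeing exactly on a prescribed union of cells so that the sets $A_{mm}$ shatter any given $d$-point subset of $\Omega$: each $A_{ij}$ is the preimage of the graph of $\varphi$ under $\omega\mapsto(\mathbf{a}_i(\omega),\mathbf{b}_j(\omega))$, with \emph{both} coordinates varying, so it is not an instance of a fixed uniformly definable family and NIP of $\varphi$ gives no uniform convergence over $\Omega$.

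The missing idea is to abandon both the single sample point and the goal of witnessing the order property for $\varphi$ itself. The correct move -- this is what Ben Yaacov--Keisler do, and what the paper's alternative proof of Proposition \ref{prop:order} does via Lemma \ref{prop:approx} -- is to apply the VC theorem to the genuinely bounded families $\{\varphi(x,b):b\}$ and $\{\varphi(a,y):a\}$ so as to replace each random element (equivalently, each measure) by an average of $N=N(\epsilon)$ realizations $\bar{a}_i$, $\bar{b}_j$, and then to introduce the discrete formula $\theta(x_1,\dots,x_N;y_1,\dots,y_N)$ asserting that the empirical $\varphi$-density among the $N^{2}$ pairs exceeds $r+\epsilon/2$. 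The hypothesized $(r,\epsilon)$-pattern forces $\theta$ to have the order property on the tuples $(\bar{a}_i,\bar{b}_j)$, and $\theta$ is a Boolean combination of instances of $\varphi$ in more variables, hence still stable -- that is the contradiction. Without this pivot from $\varphi$ to a Boolean combination of $\varphi$, your extraction step fails.
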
 

Using the above, it is easy to see that $E_{\varphi}$ also does not witness the continuous version of the order property. This follows from the observation that the randomization encodes the computations of the Morley product. 

\begin{proposition}\label{prop:order} Suppose that $\varphi(x;y)$ is stable. For every $r \in (0,1)$ and $\epsilon > 0$, there exists some integer $n = n(\epsilon, r)$ such that there does not exist an array of Keisler measures $(\mu_i,\nu_j)_{(i,j) \in [n] \times [n]}$ where $\mu_i \in \mathfrak{M}_{\varphi}(M)$ and $\nu_j \in \mathfrak{M}_{\varphi^{\opp}}(M)$ such that
\begin{equation*} 
(\mu_i \otimes \nu_j)(\varphi(x,y)) \geq r + \epsilon \hspace{.1in} \text{whenever $i \geq j$}, 
\end{equation*} 
and, 
\begin{equation*} 
(\mu_i \otimes \nu_j)(\varphi(x,y)) \leq r  \hspace{.1in} \text{whenever $i < j$}. 
\end{equation*}
\end{proposition}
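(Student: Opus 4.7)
The plan is to apply Fact \ref{fact:BK} by encoding any hypothetical long $(r,\epsilon)$-order configuration of Keisler measures $(\mu_i,\nu_j)$ as a corresponding configuration inside a model $N \models T^R$, where each $\mathbf{a}_i,\mathbf{b}_j$ realizes the given measure and $\mathbb{E}[\varphi(\mathbf{a}_i,\mathbf{b}_j)]$ recovers the Morley product. Assume for contradiction that no such bound $n(\epsilon,r)$ works, so that arbitrarily long arrays $(\mu_i,\nu_j)_{i,j\in[n]}$ exist; the goal is to produce corresponding arrays of matching length in the randomization, contradicting the uniform bound supplied by Fact \ref{fact:BK}.

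For the encoding, first decompose each measure via Theorem \ref{Cl}: write $\mu_i = \sum_k r_{ik}\delta_{p_{ik}}$ and $\nu_j = \sum_{\ell} s_{j\ell}\delta_{q_{j\ell}}$ with $p_{ik}\in S_\varphi(M)$ and $q_{j\ell}\in S_{\varphi^{\opp}}(M)$. Working inside the monster $\mathcal{U}$, realize each $q_{j\ell}$ by some $b_{j\ell}\in\mathcal{U}$ and set $B=\{b_{j\ell}\}_{j,\ell}$. Using the $\varphi^{\opp}$-definability of each $p_{ik}$ supplied by Theorem \ref{B}, let $\hat{p}_{ik}$ denote its unique $M$-definable global extension and choose $a_{ik} \models \hat{p}_{ik}|_{MB}$. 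By Remark \ref{remark:used}, this choice ensures
\[ \mathcal{U}\models\varphi(a_{ik},b_{j\ell}) \iff (\delta_{p_{ik}}\otimes\delta_{q_{j\ell}})(\varphi(x,y))=1 \]
for every quadruple $(i,k,j,\ell)$.

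Finally, build a model $N \models T^R$ on a probability space large enough to support mutually independent random variables $\mathbf{a}_1,\ldots,\mathbf{a}_n,\mathbf{b}_1,\ldots,\mathbf{b}_n$ valued in $\mathcal{U}$ with marginals $\mathbb{P}[\mathbf{a}_i = a_{ik}] = r_{ik}$ and $\mathbb{P}[\mathbf{b}_j = b_{j\ell}] = s_{j\ell}$. By independence and the bilinear expansion used in the proof of Theorem \ref{theorem:fubi},
\[ \mathbb{E}[\varphi(\mathbf{a}_i,\mathbf{b}_j)] = \sum_{k,\ell} r_{ik}s_{j\ell}\,\mathbf{1}[\varphi(a_{ik},b_{j\ell})] = \sum_{k,\ell} r_{ik}s_{j\ell}(\delta_{p_{ik}}\otimes\delta_{q_{j\ell}})(\varphi(x,y)) = (\mu_i\otimes\nu_j)(\varphi(x,y)). \]
Hence $(\mathbf{a}_i,\mathbf{b}_j)_{i,j\in[n]}$ witnesses the $(r,\epsilon)$-order property for $\mathbb{E}[\varphi(\mathbf{x};\mathbf{y})]$ in $N$ for arbitrarily large $n$, contradicting Fact \ref{fact:BK}. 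The main obstacle I foresee is the careful construction of $N$ together with these random variables — in particular, accommodating the countably many possible values of each $\mathbf{a}_i,\mathbf{b}_j$ and enforcing the required independence — but this is routine in the randomization formalism (see Section 3.2 of \cite{conant2023generic}).
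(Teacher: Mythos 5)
Your proposal is correct and follows essentially the same route as the paper: decompose each measure into a weighted sum of types via Theorem \ref{Cl}, realize the types coherently in the monster using the $M$-definable extensions and Remark \ref{remark:used}, package the weights as random variables in a simple model of $T^R$ so that $\mathbb{E}[\varphi(\mathbf{a}_i,\mathbf{b}_j)]=(\mu_i\otimes\nu_j)(\varphi(x,y))$, and invoke Fact \ref{fact:BK}. The paper makes your ``routine'' construction step explicit by working over $[0,1]^2$ with Lebesgue measure, letting the $\mathbf{a}$'s depend only on one coordinate and the $\mathbf{b}$'s only on the other (note that only independence of each $\mathbf{a}_i$ from each $\mathbf{b}_j$ is needed, not full mutual independence), but this is a presentational difference, not a mathematical one.
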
 

\begin{proof} Consider $[0,1]^{2}$ with the corresponding Lesbegue measure $L^{2}$ and the simple models of the randomization of $T$ relative to $[0,1]^{2}$, namely $M^{[0,1]^{2}}$ and $\mathcal{U}^{[0,1]^{2}}$. More explicitly, if $N$ is a model of $T$ then $N^{[0,1]}$ is the collection of measurable maps from $[0,1]^{2}$ to $N$ with finite image. It follows from quantifier elimination of $T^{R}$ that $M^{[0,1]^{2}} \prec \mathcal{U}^{[0,1]^{2}}$. If $\mu \in \mathfrak{M}_{\varphi}(M)$ and $\nu \in \mathfrak{M}_{\varphi^{\opp}}(M)$, then 
Theorem \ref{Cl} implies that $\mu = \sum_{k \in K } r_{k}\delta_{p_{k}}$ and $\nu = \sum_{w \in W } d_{w}\delta_{q_{w}}$ where $K$ and $W$ are initial segments of $\mathbb{N}$ and
\begin{enumerate} 
\item for each $k \in K$, $p_{k}$ is in $S_{\varphi}(M)$,
\item for each $w \in W$, $q_{w}$ is in $S_{\varphi^{\opp}}(M)$,
\item for each $k \in K$ and $w \in W$, $r_{k}$ and $d_{w}$ are positive real numbers,
\item $\sum_{k \in K} r_k = \sum_{w \in W} d_{w} = 1$. 
\end{enumerate} 

For each $q_{w}$, choose some $b_{w}$ in $\mathcal{U}^{y}$ such that $b_{w} \models q_{w}$. Let $\mathbf{b}_{\nu}:[0,1]^{2} \to \mathcal{U}$ via $\mathbf{b}_{\nu}((s,t)) = b_{w}$ 
whenever $s \in [\sum_{\ell=0}^{w-1} d_{\ell}, \sum_{\ell=0}^{w} d_{\ell})$ with the convention that $\sum_{\ell = 0}^{-1} d_{\ell} = 0$. For each $p_{k}$, choose some $a_{k}$ in $\mathcal{U}^{x}$ such that $a_{k} \models \hat{p}_{k}|_{M(b_{w})_{w \in W}}$ where $\hat{p}_{k}$ is the unique $M$-definable extension of $p$ in $S_{\varphi}(\mathcal{U})$. Let $\mathbf{a}_{\mu}:[0,1]^{2} \to \mathcal{U}$ via $\mathbf{a}_{\mu}((s,t)) = a_{k}$ when $t \in [\sum_{\ell=0}^{k-1} r_{\ell}, \sum_{\ell=0}^{k} r_{\ell})$ again with the convention that $\sum_{\ell = 0}^{-1} r_{\ell} = 0$. In the following computations, if $(a,b) \in \mathcal{U}^{x} \times \mathcal{U}^{y}$, we let $\varphi(a,b) = 1$ if $\mathcal{U} \models \varphi(a,b)$ and $0$ otherwise. Then 
\begin{align*} 
(\mu \otimes \nu)(\varphi(x,y)) &\overset{(a)}{=} \sum_{k \in K} \sum_{w \in W} r_k d_w (\delta_{q_w}(d_{p_k}^{\varphi}(y))) \overset{(b)}{=} \sum_{k \in K} \sum_{w \in W} r_k d_w \varphi(a_k,b_w) \\
&=\int_{(s,t) \in [0,1]^{2}} \varphi(\mathbf{a}_{\mu}(s,t)),\mathbf{b}_{\nu}(s,t))dL^{2} = \mathbb{E}[\varphi(\mathbf{a}_{\mu},\mathbf{b}_{\nu})]. 
\end{align*} 
Equation $(a)$ is derived in the proof of Theorem \ref{Cl}. Equation $(b)$ follows from Remark \ref{remark:used}. Thus, if the statement is false, then $\mathbb{E}[\varphi(x,y)]$ witnesses the continuous logic version of the order property. This contradicts Fact \ref{fact:BK}. 
\end{proof} 

We now work to give a second proof of Proposition \ref{prop:order} via the VC theorem. The statement of the VC theorem given below is much weaker than the general statement, but it is all that we need. 

\begin{theorem}[VC--theorem] Suppose that $X$ is a set $\mathcal{F}$ is a collection of subsets. 
Suppose that the VC-dimension of the class $\mathcal{F}$ is bounded by $d$. 
Then for every $\epsilon > 0$, there exists an integer $n = n(\epsilon,d)$ such that for every atomic measure $\mu$ on $X$ 
(i.e., $\mu = \sum_{i \in I} r_i \delta_{x_i}$ where $I \subseteq \mathbb{N}$),
 there exists $a_1,...,a_n \in X$ such that for any $F \in \mathcal{F}$, 
\begin{equation*} 
\sup_{F \in \mathcal{F}} |\mu(F) - \Av(a_1,...,a_n)(F)| < \epsilon. 
\end{equation*} 
We remark that $n$ does not depend on the choice of measure. 
\end{theorem}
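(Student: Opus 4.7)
The plan is to prove the VC theorem by the probabilistic method: sample $a_1, \ldots, a_n$ i.i.d.\ from $\mu$ itself and invoke the classical Vapnik--Chervonenkis uniform convergence inequality. Since $\mu = \sum_{i \in I} r_i \delta_{x_i}$ is a genuine probability measure on the countable set $X_0 := \{x_i : i \in I\} \subseteq X$, i.i.d.\ sampling from $\mu$ is well defined; I write $\hat{\mu}_n := \Av(a_1, \ldots, a_n) = \tfrac{1}{n}\sum_{j=1}^{n}\delta_{a_j}$ for the empirical measure of the sample.

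The heart of the argument is the VC inequality. Because $\mathcal{F}$ has VC-dimension at most $d$, the Sauer--Shelah lemma bounds the shatter function $\pi_{\mathcal{F}}(n)$ by a polynomial in $n$ of degree $d$, and the classical symmetrization argument of Vapnik and Chervonenkis produces an estimate of the form
$$ \mathbb{P}\!\left[\,\sup_{F \in \mathcal{F}} |\mu(F) - \hat{\mu}_n(F)| > \epsilon\,\right] \;\leq\; C_1 \, n^{d} \, e^{-C_2\, n \epsilon^{2}} $$
for universal constants $C_1, C_2 > 0$. Crucially, the right-hand side depends only on $\epsilon$, $d$, and $n$, and is independent of the ambient set $X$ and the specific measure $\mu$. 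Choosing $n = n(\epsilon,d)$ large enough to drive this quantity strictly below $1$, the probabilistic method guarantees the existence of a deterministic realization $(a_1, \ldots, a_n) \in X^n$ attaining $\sup_{F \in \mathcal{F}} |\mu(F) - \Av(a_1, \ldots, a_n)(F)| < \epsilon$, which is exactly the desired conclusion, with the same $n$ working for all atomic $\mu$.

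The main obstacle is the customary measurability concern attached to $\sup_{F \in \mathcal{F}} |\mu(F) - \hat{\mu}_n(F)|$ as a function on the sample space: in full generality, the VC theorem requires Pollard-style image-admissibility hypotheses so that this supremum is a measurable random variable to which tail bounds apply. In our discrete setting this pathology is avoided because both $\mu$ and every $\hat{\mu}_n$ are supported on the countable set $X_0$. Replacing $\mathcal{F}$ by its trace $\{F \cap X_0 : F \in \mathcal{F}\}$ preserves the VC-dimension and leaves all the relevant measure evaluations unchanged; and since, for any fixed sample of size $n$, only finitely many restriction patterns $F \cap \{a_1,\ldots,a_n\}$ occur, one may extract a countable cofinal subfamily of $\mathcal{F}$ realizing the supremum without loss. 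The VC inequality then applies verbatim, and the probabilistic argument above concludes the proof.
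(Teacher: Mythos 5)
The paper does not actually prove this statement: it is quoted as a classical black box (the Vapnik--Chervonenkis uniform convergence theorem, in a deliberately weakened form), so there is no ``paper proof'' to match your argument against. Your proposal is essentially a correct reduction of the paper's statement to the textbook VC inequality: you sample i.i.d.\ from $\mu$, invoke the bound $\mathbb{P}[\sup_{F}|\mu(F)-\hat{\mu}_n(F)|>\epsilon]\leq C_1 n^d e^{-C_2 n\epsilon^2}$ (symmetrization plus Sauer--Shelah plus a Chernoff-type estimate), and apply the probabilistic method; the uniformity of $n$ in $\mu$ is exactly the uniformity of the tail bound. Your handling of the two points where the paper's statement is nonstandard --- $X$ is an arbitrary set with no measurable structure, and $\mathcal{F}$ is an arbitrary family --- is the right one: atomicity of $\mu$ lets you pass to the countable support $X_0$, where the sample space $X_0^n$ is countable and discrete, so every function on it is measurable and the usual image-admissibility caveats evaporate; in fact you can say this more simply than you do, since no cofinal subfamily extraction is needed once the sample space is discrete. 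Two small points to tidy: the core inequality itself is asserted rather than proved, so your argument is only as self-contained as the citation it rests on (which is fine, and matches the paper's own stance); and from $\mathbb{P}[\sup>\epsilon]<1$ the probabilistic method only yields a sample with $\sup\leq\epsilon$, so you should run the argument at $\epsilon/2$ to get the strict inequality in the statement.
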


\begin{lemma}\label{prop:approx} Suppose that $\varphi(x,y)$ is stable. For every $\epsilon > 0$ there exists some natural number $N = N(\epsilon)$ such that for every $\mu \in \mathfrak{M}_{\varphi}(M)$ there exists $a_1,...,a_N \in M$ such that for any $b \in M^{y}$, 
\begin{equation*} 
|\mu(\varphi(x,b)) - \Av(a_1,....,a_{N})(\varphi(x,b))| < \epsilon. 
\end{equation*} 
\end{lemma}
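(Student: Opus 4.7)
The plan is to apply the VC theorem to the atomic structure of $\mu$ given by Theorem~\ref{Cl}, and then reflect from types back to elements of $M^{x}$ by combining Theorem~\ref{B} with elementarity in a saturated extension.

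Since $\varphi(x;y)$ is (honestly) stable, it is NIP and so has finite VC-dimension, say $d$. Let $N := n(\epsilon/3, d)$ be the sample size provided by the VC theorem; this depends only on $\varphi$ and $\epsilon$. Given $\mu \in \mathfrak{M}_{\varphi}(M)$, invoke Theorem~\ref{Cl} to write $\mu = \sum_{i \in I} r_{i}\delta_{p_{i}}$, and truncate: choose a finite $I_{0} \subseteq I$ with $\sum_{i \notin I_{0}} r_{i} < \epsilon/3$ and set $\mu_{0} := \sum_{i \in I_{0}} r_{i}\delta_{p_{i}}$. Apply the VC theorem to the atomic probability measure $\mu_{0}/\|\mu_{0}\|$ on $X := S_{\varphi}(M)$ with the class $\mathcal{F} := \{[\varphi(x,b)] : b \in M^{y}\}$ of VC-dimension at most $d$. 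This yields $q_{1},\dots,q_{N}$ drawn from the support $\{p_{i} : i \in I_{0}\}$ such that
\[\sup_{b \in M^{y}}\left|\mu_{0}(\varphi(x,b)) - \frac{1}{N}\sum_{j=1}^{N}\mathbf{1}_{[\varphi(x,b)]}(q_{j})\right| < 2\epsilon/3,\]
after absorbing the normalization factor using $|1-\|\mu_{0}\||<\epsilon/3$.

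The remaining step is to replace each type $q_{j}$ by an actual element $a_{j} \in M^{x}$. By Theorem~\ref{B}, each $p_{i}$ and each $q_{j}$ has a $\varphi^{\opp}$-definition over $M$, so that $\mu_{0}(\varphi(x,b)) = \sum_{i \in I_{0}} r_{i}\, d^{\varphi}_{p_{i}}(b)$ and $\mathbf{1}_{[\varphi(x,b)]}(q_{j}) = d^{\varphi}_{q_{j}}(b)$. Let $\mathcal{U} \succeq M$ be $|M|^{+}$-saturated and let $\tilde{a}_{j} \in \mathcal{U}^{x}$ realize the unique $M$-definable extension $\hat{q}_{j} \in S_{\varphi}(\mathcal{U})$ of $q_{j}$, so that $\mathcal{U} \models \varphi(\tilde{a}_{j},b) \leftrightarrow d^{\varphi}_{q_{j}}(b)$ for every $b \in \mathcal{U}^{y}$. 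Then the sentence
\[\exists a_{1},\dots,a_{N}\; \forall b\; \Bigl|\sum_{i \in I_{0}} r_{i}\, d^{\varphi}_{p_{i}}(b) - \tfrac{1}{N}\sum_{j} \varphi(a_{j},b)\Bigr| < 2\epsilon/3\]
is first-order over $M$ (as the truth values of the finitely many formulas $d^{\varphi}_{p_{i}}(b)$ and $\varphi(a_{j},b)$ vary, the inner quantity takes only finitely many values, so the strict inequality reduces to a finite Boolean combination of $\varphi$- and $\varphi^{\opp}$-formulas), and it is witnessed in $\mathcal{U}$ by the $\tilde{a}_{j}$. Elementarity produces $a_{1},\dots,a_{N} \in M^{x}$ witnessing the analogous statement in $M$, and combining with the truncation bound yields the desired uniform $\epsilon$-approximation.

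The main obstacle is this encoding step: one must carefully check that uniform approximation of $\mu_{0}$ by a finite empirical distribution can indeed be phrased as a single first-order sentence with parameters from $M$, so that elementarity applies. The rest — finite VC-dimension of a stable formula, the truncation of the atomic decomposition, and the use of the $M$-definable extension from Remark~\ref{remark:used} — is routine.
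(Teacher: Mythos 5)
Your first step---decomposing $\mu$ into an atomic measure on $S_{\varphi}(M)$ via Theorem~\ref{Cl}, truncating, and applying the VC theorem to the class $\{[\varphi(x,b)] : b \in M^{y}\}$ (whose VC-dimension does agree with that of $\varphi$ in $M$, since a shattered finite set of types yields, by consistency and elementarity, a shattered set of points of $M^{x}$)---is fine, and the $\epsilon/3$ bookkeeping checks out. The gap is in the final step, where you replace the types $q_{1},\dots,q_{N}$ by elements of $M^{x}$. A realization $\tilde{a}_{j}$ of the $M$-definable global extension $\hat{q}_{j}$ necessarily lives in some $\mathcal{U}' \succ \mathcal{U}$, and the equivalence $\varphi(\tilde{a}_{j},b) \leftrightarrow d^{\varphi}_{q_{j}}(b)$ then holds only for $b$ ranging over $\mathcal{U}$, not over the model $\mathcal{U}'$ containing $\tilde{a}_{j}$. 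So your sentence $\exists \bar{a}\,\forall b\,(\cdots)$ is never actually verified in any structure: the universal quantifier ranges over the whole model, and for $b \in (\mathcal{U}')^{y}\setminus \mathcal{U}^{y}$ you have no control over $\varphi(\tilde{a}_{j},b)$. Moreover, since the sentence has parameters only from $M$, its truth in an elementary extension is \emph{equivalent} to its truth in $M$; ``verify it upstairs and descend'' cannot gain anything, because verifying it upstairs is exactly the statement you are trying to prove.

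To see the issue is not cosmetic, take $\mu = \delta_{p}$ for a definable $p \in S_{\varphi}(M)$ not realized in $M$: then every $q_{j}$ equals $p$, and your sentence asserts that for every $b \in M^{y}$ a proportion greater than $1 - 2\epsilon/3$ of the $a_{j}$ satisfy $\varphi(a_{j},b)\leftrightarrow d^{\varphi}_{p}(b)$. That statement is true, but it cannot be witnessed by realizations of $\hat{p}$ (which all lie outside $M$ and fail the biconditional at new parameters), and its truth depends essentially on the fact that $\hat{p}$ is also \emph{finitely satisfiable} in $M$ (\cite[Proposition 2.3]{Pillay})---an ingredient your argument never invokes; definability alone does not place approximating points inside $M$. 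This ``points from the small model'' step is precisely what the paper's proof black-boxes: it passes to the global measure $\hat{\mu} = \sum_{i} r_{i}\delta_{\hat{p}_{i}}$, observes that it is both definable over and finitely satisfiable in $M$, and cites \cite[Theorem 6.4]{gannon2022sequential} for the uniform approximation by averages over tuples from $M$, with the VC theorem supplying uniformity of $N$ in $\mu$. A self-contained version of your argument would need to reprove that approximation result rather than appeal to elementarity.
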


\begin{proof} There are several ways too see this. By Corollary \ref{Cl}, we may write $\mu = \sum_{i=0}^{\omega} r_i \delta_{p_i}$ where each $p_i$ is in $S_{\varphi}(M)$. Let $\mathcal{U}$ be a monster model such that $M \prec \mathcal{U}$ and consider the measure $\hat{\mu} \in \mathfrak{M}_{\varphi}(\mathcal{U})$ given by $\sum_{i = 0}^{\omega} r_i \delta_{\hat{p}_i}$ where $\hat{p}_i$ is the unique global $M$-definable extension of $p$ to $\mathcal{U}$. For each $i \in \omega$, we have that $\hat{p}_i$ is both definable over $M$ and finitely satisfiable in $M$ (\cite[Proposition 2.3]{Pillay}). As consequence, the measure $\hat{\mu}$ is finitely satisfiable and $\varphi-$definable over $M$ (for appropriate definitions in this context, see \cite[Section 6]{gannon2022sequential}). If $\varphi(x;y)$ is stable, it is NIP and so an application of \cite[Theorem 6.4]{gannon2022sequential} implies that for every $\epsilon > 0$, there exists $a_1,...,a_n \in M^{x}$ such that 
\begin{equation*} 
\sup_{b \in \mathcal{U}^{y}}| \mu(\varphi(x,b)) - \Av(\bar{a})(\varphi(x,b))| < \epsilon.  
\end{equation*} 
An application of the VC theorem gives uniform bounds. 
\end{proof} 

\begin{altproof} Suppose not. Then there exists $r \in (0,1)$ and $\epsilon > 0$ and sequence $(\mu_i,\nu_j)_{(i,j) \in \omega \times \omega}$ which witnesses the $(r,\epsilon)-$order property. We now construct a discrete formula (which is a Boolean combination of $\varphi(x;y)$) which witnesses the order property -- and since stable formulas are closed under Boolean combinations, we have a contradiction. By Proposition \ref{prop:approx}, there exists a natural number $N$ such that 
\begin{enumerate} 
\item For every $\mu \in \mathfrak{M}_{\varphi}(M)$, there exists $a_1,\dots,a_N \in M^{x}$ such that 
\begin{equation*} 
\sup_{b \in M^{y}} |\mu(\varphi(x,b)) - \Av(\bar{a})(\varphi(x,b))| < \frac{\epsilon}{16}. 
\end{equation*}
For each $i \in \omega$, let $\bar{a}_i = a^i_1,\dots,a^i_n$ witness the above equation for $\mu_i$. 
\item For every $\nu \in \mathfrak{M}_{\varphi^{\opp}}(M)$, there exists $b_1,\dots,b_N \in M^{y}$ such that 
\begin{equation*} 
\sup_{a \in M^{x}} |\nu(\varphi(a,y)) - \Av(\bar{b})(\varphi(a,y))| < \frac{\epsilon}{16}. 
\end{equation*}
For each $j \in \omega$, let $\bar{b}_j = b^j_1,\dots,b^j_N$ witness the above equation for $\nu_j$. 
\end{enumerate} 
Consider the formula given by, 
\begin{equation*} 
\theta(x_1,...,x_N,y_1,...,y_N) := \bigvee_{\substack{A \times B \subseteq [N] \times [N] \\ \frac{|A \times B|}{N^{2}} > r + \frac{\epsilon}{2}}} \left( \bigwedge_{(i,j) \in A \times B} \varphi(x_i,y_i) \right).
\end{equation*} 
We claim that $\theta(\bar{x},\bar{y})$ is unstable. Notice that 
\begin{equation*} 
M \models \theta(\bar{a}_i,\bar{b}_j) \Longrightarrow M \models \bigvee_{\substack{A \times B \subseteq [N] \times [N] \\ \frac{|A \times B|}{N^{2}} > r + \frac{\epsilon}{2}}} \left( \bigwedge_{(\ell,k) \in A \times B} \varphi(a^{i}_\ell,b^{j}_k) \right) \Longrightarrow (\Av(\bar{a}_i) \otimes \Av(\bar{b}_j))(\varphi(x,y)) > r + \frac{\epsilon}{2},
\end{equation*} 
and likewise, 
\begin{equation*}
M \models \neg \theta(\bar{a}_i,\bar{b}_j) \Longrightarrow (\Av(\bar{a}_i) \otimes \Av(\bar{b}_j))(\varphi(x,y)) \leq r + \frac{\epsilon}{2}. 
\end{equation*} 
Moreover, if $i < j$, then
\begin{align*} 
r \geq (\mu_i \otimes \nu_j) (\varphi(x,y)) &\approx_{\epsilon/16} (\Av(\bar{a}_i) \otimes \nu_j) (\varphi(x,y))  \\ 
&= (\nu_j \otimes \Av(\bar{a}_i)) (\varphi(x,y)) \approx_{\epsilon/16} (\Av(\bar{b}_j) \otimes \Av(\bar{a}_i)) (\varphi(x,y)),
\end{align*} 
and likewise, if $i \geq j$, 
\begin{align*} 
r + \epsilon \leq (\mu_i \otimes \nu_j) (\varphi(x,y)) &\approx_{\epsilon/16} (\Av(\bar{a}_i) \otimes \nu_j) (\varphi(x,y))  \\ 
&= (\nu_j \otimes \Av(\bar{a}_i)) (\varphi(x,y)) \approx_{\epsilon/16} (\Av(\bar{b}_j) \otimes \Av(\bar{a}_i)) (\varphi(x,y)),
\end{align*} 
Hence, if $i < j$ and $\models \theta(\bar{a}_i,\bar{b}_i)$, then $\Av(\bar{a}_i) \otimes \Av(\bar{b}_j)(\varphi(\bar{x},\bar{y}))$ is greater than $r + \frac{\epsilon}{2}$ (by witnessing $\theta$) and less than $r + \frac{\epsilon}{8}$ (by the above implication) -- a contradiction. Hence, if $i < j$, then $\models \neg \theta(\bar{a}_i,\bar{b}_i)$. A similar argument shows that if $i \geq j$ then $\theta(\bar{a}_i,\bar{b}_j)$ must hold. Thus $\theta(\bar{x},\bar{y})$, a Boolean combination of $\varphi(x;y)$, is unstable -- a contradiction. 
\end{altproof} 

\subsection*{Acknowledgements} We thank Sergei Starchenko and Gabriel Conant for helpful discussions during the writing of the initial research note. We thank the country of Kazakhstan for their warm welcome and hospitality. 

\bibliographystyle{plain}
\bibliography{refs}

\begin{thebibliography}{10}

\bibitem{BEN}
Ita{\"\i} Ben~Yaacov.
\newblock Model theoretic stability and definability of types, after a.
  grothendieck.
\newblock {\em Bulletin of Symbolic Logic}, 20(4):491--496, 2014.

\bibitem{ben2009randomizations}
Ita{\"\i} Ben~Yaacov and H~Jerome~Keisler.
\newblock Randomizations of models as metric structures.
\newblock {\em Confluentes Mathematici}, 1(02):197--223, 2009.

\bibitem{conant2019note}
Gabrial Conant and Caroline Terry.
\newblock A note on stability in a model.
\newblock 2019.

\bibitem{Conant}
Gabriel Conant.
\newblock Stability in a group.
\newblock {\em Groups, Geometry, and Dynamics}, 15(4):1297--1330, 2021.

\bibitem{conant2023generic}
Gabriel Conant, Kyle Gannon, and James~E Hanson.
\newblock Generic stability, randomizations, and nip formulas.
\newblock {\em arXiv preprint arXiv:2308.01801}, 2023.

\bibitem{JC}
John~B Conway.
\newblock {\em A course in functional analysis}, volume~96.
\newblock Springer, 1985.

\bibitem{gannon2019local}
Kyle Gannon.
\newblock Local keisler measures and nip formulas.
\newblock {\em The Journal of Symbolic Logic}, 84(3):1279--1292, 2019.

\bibitem{gannon2022sequential}
Kyle Gannon.
\newblock Sequential approximations for types and keisler measures.
\newblock {\em Fundamenta Mathematicae}, 257:305--336, 2022.

\bibitem{GR}
Alexandre Grothendieck.
\newblock Crit{\`e}res de compacit{\'e} dans les espaces fonctionnels
  g{\'e}n{\'e}raux.
\newblock {\em American Journal of Mathematics}, pages 168--186, 1952.

\bibitem{khanaki2024grothendieck}
K.~Khanaki.
\newblock Grothendieck's double limit theorem and model theory.
\newblock {\em Mathematical Culture and Thought}, 43(2):247--259, 2024.

\bibitem{khanaki2020stability}
Karim Khanaki.
\newblock Stability, the nip, and the nsop: model theoretic properties of
  formulas via topological properties of function spaces.
\newblock {\em Mathematical Logic Quarterly}, 66(2):136--149, 2020.

\bibitem{khanaki2018remarks}
Karim Khanaki and Anand Pillay.
\newblock Remarks on the nip in a model.
\newblock {\em Mathematical Logic Quarterly}, 64(6):429--434, 2018.

\bibitem{khanaki2024simple}
Karim Khanaki and Massoud Pourmahdian.
\newblock Simple models of randomization and preservation theorems.
\newblock {\em arXiv preprint}, 2024.

\bibitem{Pillay}
Anand Pillay.
\newblock Generic stability and grothendieck.
\newblock {\em South American Journal of Logic}, 2(2):1--6, 2016.

\bibitem{Charge}
KPS~Bhaskara Rao and M~Bhaskara Rao.
\newblock {\em Theory of charges: a study of finitely additive measures},
  volume 109.
\newblock Academic Press, 1983.

\bibitem{SH}
A~Sobczyk and PC~Hammer.
\newblock A decomposition of additive set functions.
\newblock {\em Duke Mathematical Journal}, 1944.

\bibitem{STAR}
Sergei Starchenko.
\newblock On grothendieck's approach to stability.
\newblock Unpublished research note, available via Wayback Machine, March 2017.

\end{thebibliography}
\end{document}